\documentclass[11pt, a4paper, reqno]{amsart}
\usepackage{booktabs}
\usepackage[subnum]{cases}
\usepackage{algorithm}
\usepackage{algorithmic}
\usepackage{amsfonts}
\usepackage{amsmath}
\usepackage{amssymb}
\usepackage{amsthm}
\usepackage[english]{babel}
\usepackage{bbold}
\usepackage{color}
\usepackage{comment}
\usepackage{csquotes}
\usepackage{enumerate}
\usepackage[margin=1.5in]{geometry}
\usepackage{graphicx}
\usepackage[hidelinks]{hyperref}
\usepackage{mathdots}
\usepackage{mathtools}
\usepackage{tikz-cd}
\usepackage{verbatim}
\usepackage[all]{xy}
\usepackage{xcolor}
\usepackage{zref-clever}
\usepackage{subcaption}

\usepackage{mysansmath}
\let\TreeFnt\mathsfbfit
\zcsetup{cap}

\usepackage[style=alphabetic]{biblatex}
\AtEveryBibitem{%
	\clearfield{eventtitle}%
	\clearfield{venue}%
	\clearfield{eventyear}%
	\iffieldundef{doi}{}{%
		\clearfield{issn}%
		\clearfield{isbn}%
		\clearfield{url}%
	}%
}

\usetikzlibrary{calc}

\usepackage[patch-newtheorem]{mytheorem-enum}
\newtheorem{theorem}{Theorem}[section]
\newtheorem{corollary}[theorem]{Corollary}
\newtheorem{lemma}[theorem]{Lemma}
\newtheorem{proposition}[theorem]{Proposition}
\newtheorem{question}[theorem]{Question}

\theoremstyle{definition}
\newtheorem{definition}[theorem]{Definition}
\theoremstyle{remark}
\newtheorem{example}[theorem]{Example}
\newtheorem{remark}[theorem]{Remark}

\newtheoremstyle{claim}{2pt}{2pt}{}{}{\itshape}{.}{.5em}{}
\theoremstyle{claim}
\newtheorem{claim}{Claim}
\zcRefTypeSetup{question}{
	Name-sg = Question,
	name-sg = question,
	Name-pl = Questions,
	name-pl = questions,
}
\zcRefTypeSetup{claim}{
	Name-sg = Claim ,
	name-sg = claim,
	Name-pl = Claims ,
	name-pl = claims ,
}
\zcRefTypeSetup{equation}{
	Name-sg=,
	name-sg=,
	Name-pl=,
	name-pl=,
	Name-sg-ab=,
	name-sg-ab=,
	Name-pl-ab=,
	name-pl-ab=
}
\AtBeginEnvironment{proof}{\setcounter{claim}{0}}
\makeatletter
\newenvironment{claimproof}[1][Proof of claim]{\par
	\pushQED{}
	\normalfont \topsep2\p@\@plus2\p@\relax
	\trivlist
	\item\relax
	{\itshape
		#1\@addpunct{.}}\hspace\labelsep\ignorespaces
}{%
	\popQED\endtrivlist\@endpefalse
}
\makeatother
\newcommand{\RR}{\mathbb{R}}
\newcommand{\TT}{\mathbb{T}}

\newcommand{\bergman}[1]{B(K_{#1})}
\newcommand{\one}{\mathbb{1}}
\newcommand{\conv}{{\operatorname*{conv}}}

\newcommand{\torus}[1]{\mathbb{R}^{#1} / \mathbb{R}\one}
\newcommand{\tropdist}{d_{\Delta}}

\newcommand{\symdist}{d_{\sym}}
\newcommand{\tropmedian}[1]{\mu_{#1}}

\newcommand{\Oscar}{\textsc{oscar}}
\newcommand{\Polymake}{\textsc{polymake}}

\newcommand{\MCF}{\textsc{MCF}}
\makeatletter
\newcommand\@mybf{bx}
\newcommand\K{%
	\ifx\f@series\@mybf\relax\boldmath\fi
	\texorpdfstring{$k$}{k}%
}

\usepackage[colorinlistoftodos,bordercolor=orange,backgroundcolor=orange!20,linecolor=orange,textsize=tiny]{todonotes}

\title{Tropical \K-means clustering for phylogenetic trees}

\author[F. Lenzen]{Fabian Lenzen}
\address[Fabian Lenzen]{Technische Universität Berlin}
\email{lenzen@math.tu-berlin.de}
\author[L. Weis]{Lena Weis}
\address[Lena Weis]{Technische Universität Berlin}
\email{weis@math.tu-berlin.de}
\thanks{The authors want to thank Michael Joswig for proposing the topic.
  The topic was inspired by a question of Rodrigo Silveira at the XXI Spanish Meeting on Computational Geometry regarding the possibility of tropical $k$-means.
  We also want to thank Shelby Cox for pointing out \cite{Smith:2021} to us.
}
\thanks{
  FL was funded by the Deutsche Forschungsgemeinschaft (DFG, German Research
  Foundation) under Germany's Excellence Strategy – The Berlin Mathematics
  Research Center MATH+ (EXC-2046/1, EXC-2046/2, project ID: 390685689).
}

\DeclareMathOperator{\sym}{sym}

\DeclareMathOperator{\Vor}{Vor}
\DeclareMathOperator{\argmin}{arg\,min}
\DeclareMathOperator{\FW}{FW}
\DeclareMathOperator{\Mid}{mid}
\DeclarePairedDelimiter{\abs}{\lvert}{\rvert}
\DeclarePairedDelimiter{\norm}{\lVert}{\rVert}

\DeclarePairedDelimiterX{\Set}[1]{\{}{\}}{\setargs{#1}}
\NewDocumentCommand{\setargs}{>{\SplitArgument{1}{;}}m}{\setargsaux#1}
\NewDocumentCommand{\setargsaux}{mm}{\IfNoValueTF{#2}{#1} {#1\nonscript\:\delimsize\vert\allowbreak\nonscript\:\mathopen{}#2}}%
\usetikzlibrary{scopes,calc,quotes}
\let\Loss\ell

\usepackage{enumitem}
\setlist{ref=(\arabic*)}

\usepackage{marginnote,xpatch}
\makeatletter
\patchcmd{\@todonotes@drawMarginNoteWithLine}{\marginpar}{\marginnote}{}{}
\makeatother
\begin{document}
  \begin{abstract}
    The asymmetric tropical distance is a distance measure on the tropical torus $\torus{n}$
    and in particular on the Bergman fan $\bergman{N} \subseteq \torus{\binom{N}{2}}$ of the complete graphical matroid.
    In this paper, we define and analyse a clustering algorithm for equidistant phylogenetic trees based on this distance,
    using the correspondence between $\bergman{N}$ and the space of equidistant trees with $N$ leaves.
  \end{abstract}
  
  \maketitle
  
    
  \section{Introduction}
  In mathematical biology, an active research problem is to find, given a set $\mathcal{S}$ of phylogenetic trees, a so-called \emph{consensus tree}
  that, intuitively, represents the trees in $\mathcal{S}$ by a single one.
  There exist various candidates for consensus trees and different \emph{consensus methods} to compute them; see \cite{Bryant:2003} for a survey.
  
  However, computing a consensus tree only makes sense if the trees in $\mathcal{S}$
  are \enquote{similar enough} to be unified.
  Otherwise, if the set of trees is too heterogeneous, it becomes necessary to partition $\mathcal{S}$ into subsets of \enquote{sufficiently similar} trees,
  and represent each of these by an individual consensus tree.
  To accomplish this, we apply a method called $k$-means clustering for this task.
  We briefly summarize the method.
  
  \smallskip
  Given a finite subset $S$ of \emph{sites} in a metric space $(V, d)$, an integer $k \leq \abs{S}$ and a positive real number $q$,
  the \emph{$k$-means clustering} method seeks to partition the set $S$ into a set $\mathcal{C} = \{C_1,\dotsc,C_k\}$ of subsets of $S$, called \emph{clusters},
  such that the \emph{loss}
  \begin{equation}
  	\label{eq:k-means-clustering}
  	\Loss(\mathcal{C}) \coloneqq \sum_{i=1}^k \sum_{v \in C_i} d(v, \mu_i)^q
  \end{equation}
  becomes minimal \cite{Steinhaus:1957}.
  Here, $\mu_i$ denotes the \emph{$q$-mean} of $C_i$ in $V$; i.e., the (or a) point $\mu_i \in V$ for which
  \begin{equation}
  	\label{eq:euclidean-median}
  	\sum_{v \in C_i} d(v, \mu_i)^q
  \end{equation}
  is minimized.
  In the context of clustering, the representative $\mu_i$ of the cluster $C_i$ is also called the \emph{centroid} of $C_i$.
  
  For example, if $V$ is an euclidean space with euclidean metric $d(s,t) = \norm{s-t}$ for the $L^2$-norm $\norm{-}$,
  $\mu_i$ is the \emph{center of mass} of $C_i$, given as the the coordinate-wise $2$-mean of the vectors in $C_i$.
  
  In general, finding the local optimum $\mathcal{C}^*$ of \zcref{eq:k-means-clustering} in arbitrary metric spaces is NP-complete \cite[Theorem~8]{KettleboroughRayward-Smith:2013}.
  While in principle, \zcref{eq:k-means-clustering} with $q=2$ can be solved in $d$-dimensional Euclidean spaces in time $\mathcal{O}(N^{(kd+1)})$ \cite{InabaKatohEtAl:1994},
  the large exponent usually renders this algorithm unfeasible in practice.
  
  However, a local optimum $\mathcal{C}$ of \zcref{eq:k-means-clustering} (for any $q$) can be obtained
  through a simple iterative process called \emph{Lloyd's algorithm} \cite[\S3.6]{MacQueen:1967}.
  By seeding the initial state of this iterative process appropriately,
  it is possible to bound the expectation value of the \emph{competitive factor} $\Loss(\mathcal{C})/\Loss(\mathcal{C}^*)$.
  With this seeding, the algorithm is usually called \emph{$k$-means++ clustering} \cite{ArthurVassilvitskii:2007}; see \zcref{sec:tropical-k-means} for details.
  
  For $q = 1$ in \zcref{eq:euclidean-median}, the $1$-mean $\mu_i$ is also called a \emph{Fermat--Weber point} of $C_i$.
  In euclidean spaces, if $C_i$ is in general position, the Fermat--Weber point of $C_i$ is unique and called the \emph{median} of $C_i$.
  While no explicit formula for the euclidean median exists, it can be approximated to arbitrary accuracy through an iterative process;
  e.g., through \emph{Weiszfeld's algorithm} \cite{Weiszfeld:1937}.
  
  Observe however that neither the definition of the $q$-mean,
  nor the definition of the $k$-means clustering, depend on $V$ being a metric space.
  In fact, it is even perfectly natural to generalize the problem to quasi-metric spaces $V$.
  
  \smallskip
  This aspect of generalizability becomes vital when it comes to applying the method to phylogenetic trees.
  The set of trees with a fixed set of $N$ taxa has an inherent structure as a \emph{tropical subspace} of $\RR^n$ for $n = \binom{N}{2}$.

  The latter is equipped with the \emph{asymmetric tropical distance}
  \begin{equation}
    \label{eq:def-tropdist}
    \begin{aligned}
      \tropdist(x,y) \ = \ \sum_{i=1}^{n} (y_i - x_i) + n \max\{x_i - y_i\} \ ,
    \end{aligned}
  \end{equation}
  which defines a \emph{quasi-metric} on $\RR^n$;
  i.e., it is positive definite (meaning that $\tropdist(x,y)=0$ if and only if $x=y$) 
  and satisfies the triangle inequality
  \begin{equation}
    \label{eq:tropdist-triangle-ineq}
    \tropdist(x, z) \leq \tropdist(x, y) + \tropdist(y, z),
  \end{equation}
  but is not symmetric.  

  A method for computing a mean with respect to $\tropdist$ (for $q=1$) is given in \cite{ComaneciJoswig:24},
  which gives rise to the \emph{tropical median consensus method}. 
  With this, we develop and study a method for \emph{tropical $k$-means clustering} with respect to the asymmetric tropical distance.
  In the following, we briefly summarize our main results.
  \smallskip
  
  \subsection{Our contribution}
  We propose an algorithm for computing tropical $k$-means clustering based on Lloyd's iterative algorithm.
  Analogously to the euclidean setting, tropical $k$-means clustering can converge to local optimum $\mathcal{C}$
  rather than the global optimum $\mathcal{C}^*$ (\zcref{ex:local-optimum}).
  Using \cite{NielsenSun:2019} we show that with the $k$-means++ initial seeding mentioned above, the expectation value of the competitive factor becomes bounded (\zcref{thm:competitive}).
  Finally, we show that if $S$ consists of binary trees of sufficiently different combinatorial type,
  then tropical $k$-means++ clustering will separate them accordingly (\zcref{thm:tropicalk-means}).
  
  We provide an implementation of our algorithm in the \Oscar\ computer algebra system.
  We demonstrate the efficacy of our method by clustering different sets of phylogenetic trees (synthetic and non-synthetic).
  
  \subsection{Structure of the paper}
  The paper is stuctured as follows.
  We begin by introducing the necessary notions in \zcref{sec:preliminaries}. 
  We continue with our main results, including the tropical $k$-means algorithm, in \zcref{sec:tropical-k-means}. 
  The details of our implementation of tropical $k$-means clustering in \Oscar\ are stated in \zcref{sec:implementation}.
  Last, an outlook is provided in \zcref{sec:outlook}.
    

  \subsection{Related work}
  Clustering phylogenetic trees via $k$-means has been first considered by \textcite{StockhamEtAl2002} with respect to the Robinson--Foulds distance.
  This metric measures the distance of two phylogenetic trees as the number of different splits between them.
  The distance of two trees is therefore an integer value.
  The algorithm of Stockham et al. was further developed by \textcite{TahiriEtAl:2022} to allow for a dataset of trees with different sets of taxa.
  A qualitative anaylsis of different clustering methods of phylogenetic trees with respect to multiple tree distances intended to quantify the similarities of subtrees has been conducted by \textcite{Smith:2021}.
  In his paper, Smith points out that ``the Robinson–Foulds tree spaces [...] often fail to group trees according to phylogenetic similarity'' as it lacks resolution
  which demonstrates the need for a different approach.
  The tropical asymmetric distance on the other hand shows high resolution which is why we propose this alternative clustering.
  
  The tropical asymmetric distance $\tropdist$ was first introduced by \textcite{AminiManjunath2010}.
  \textcite{ComaneciJoswig:24} studied Fermat--Weber sets with respect to $\tropdist$ and introduced the consensus method which our paper relies on.
  Finally, we want to mention two key contributions connecting tropical geometry and phylogenetic trees, namely \cite{DS04} and \cite{ArdilaKlivans}.

  \section{Phylogenetic trees from a tropical perspective}\label{sec:preliminaries}
  \renewcommand\t{\TreeFnt{t}}
  In this section we introduce the main tools required for tropical $k$-means clustering. 
  We begin by introducing our main object of study: \emph{phylogenetic trees}.
  For us, a phylogenetic tree is a rooted tree $\t = (V,E)$ with nodes $V$ and edges $E$ whose leaves are labeled by a set of \emph{taxa} $X$,
  whose edges are assigned a length.
  In particular, internal edges are assigned a positive length, while external edges may be assigned negative length.
  The \emph{depth} of a node $v$ is the length of the path from the root to $v$.
  A tree is called \emph{equidistant} if all leaves have the same depth.
  
  Via path length, a phylogenetic tree $\t$ turns $X$ into a metric space, with metric $d_\t\colon X \times X \to \RR$.
  If $\t$ is equidistant, then $d_\t$ is an \emph{ultrametric}, i.e., if for all pairwise distinct $a,b,c \in X$, the maximum 
  \begin{align}\label{eq:three-point}
    \max\{d_\t(a,b), d_\t(a,c), d_\t(b,c)\}
  \end{align} 
  is attained at least twice.
  Conversely, every ultrametric on $X$ corresponds uniquely to an equidistant phylogenetic tree with taxa $X$; see \cite[Theorem 7.2.5]{SempleSteel:2003}.
  
  We identify $d_\t$ with $\t$ and, since the metric $d_\t$ is symmetric, we view both as a point $\t$ in $\RR^n$ for $n = \binom{N}{2}$ and $N \coloneq \abs{X}$.
  \begin{example}
    The tree below has four leaves and has height 3.
    The pairwise distances of the leaves are encoded in the vector on the right:
    \begin{figure}[h]
      \tikzset{baseline={(0,-9mm)},every label/.append style={font=\small}, treenode/.style={fill, inner sep=.75pt, shape=circle}}
      \begin{tikzpicture}
        \path[level distance=5mm, sibling distance=5mm] 
        node[treenode] (root) {}
        child { node[treenode,label={[name=l2]left:}] {}
          child { node[treenode,label={[name=l1]left:}] {}
            child {node[treenode,label=below:$\strut a$] {}}
            child {node[treenode,label=below:$\strut b$] {}}
          }
          child[level distance=10mm, sibling distance=10mm] {node[treenode,label=below:$\strut c$] {}}
        }
        child[level distance=15mm, sibling distance=15mm] {node (d) [treenode,label=below:$\strut d$] {}};
 
        \coordinate (barx) at ($(root.west)+(-15mm,0)$);
        
        \coordinate (top)    at (barx |- root.north);   
        \coordinate (bottom) at (barx |- d.south);      
        
        \draw[line width=0.5pt] (top) -- (bottom);
        
        \draw ($(top)+( -1mm,0)$) -- ++(2mm,0) node[label=left:$3$] {};
        \draw ($(top)+( -1mm,-5mm)$) -- ++(2mm,0) node[label=left:$2$] {};
        \draw ($(bottom)+( -1mm,+5mm)$) -- ++(2mm,0) node[label=left:$1$] {};
        \draw ($(bottom)+(-1mm,0)$) -- ++(2mm,0);
        
        \coordinate (arrowstart) at ($(d.east)+(10mm,7.5mm)$);
        \draw[|->, shorten >=4mm] (arrowstart) -- ++(10mm,0mm) node[right] {\small $\begin{pmatrix} d_\t(a,b) \\ d_\t(a,c) \\ d_\t(a,d) \\ d_\t(b,c) \\ d_\t(b,d) \\ d_\t(c,d) \end{pmatrix} \ = \ \begin{pmatrix} 2 \\ 4 \\ 6 \\ 4 \\ 6 \\ 6 \end{pmatrix}$.};
      \end{tikzpicture}
    \end{figure}
  \end{example}
  We define the following equivalence relation $\sim$ on the set $\mathcal{T}_X$ of equidistant phylogenetic trees with taxa $X$:
  for $\t, \t' \in \mathcal{T}_X$, we let $\t \sim \t'$ if $\t$ is obtained from $\t'$ by lengthening (or shortening) the paths from every leaf of $\t$ to its respective parent node by the same amount.
  Viewing $\t$ and $\t'$ as elements of $\RR^{n}$, we have $\t \sim \t'$ if and only if there exists $\lambda \in \RR$ such that $\t = \t' + \lambda\one$, where $\one$ is the all ones vector.
  We may view thus the set of equidistant phylogenetic trees up to $\mathcal{T}_X/{\sim}$ as a subset of 
  \[
    \torus{n} \ \coloneqq \ \Set{\TreeFnt{x} + \RR\one;\TreeFnt{x} \in \RR^n} \ .
  \]
  The latter is called \emph{$n${-}dimensional tropical torus}.
  We will see in the following section that both the tropical torus and $\mathcal{T}_X/{\sim}$ appear naturally in tropical geometry.
  
  \subsection{Tropical perspective}
  Before we elaborate on the tropical geometric nature of $\mathcal{T}_X$ and $\mathcal{T}_X/{\sim}$ we introduce some basic notions.
  For a more detailed introduction to tropical geometry see \cite{MaclaganSturmfels} or \cite{ETC}.
  
  Often, tropical geometry is termed the combinatorial shadow of algebraic geometry. 
  Like in algebraic geometry we can define \emph{tropical hypersurfaces}, \emph{tropical curves} and \emph{tropical linear spaces}.
  These objects live in a tropical (sub-)space.  
  One dimensional tropical space, namely the \emph{tropical semiring} $(\TT, \oplus, \odot)$ with ground set $\TT \coloneq \RR \cup \{-\infty\}$, replaces classical addition $+$ with $\oplus \coloneq \max$ and classical multiplication $\cdot$ with $\odot \coloneq +$. 
  Hence, $-\infty$ becomes the neutral element with respect to tropical addition $\oplus$ and $0$ the neutral element with respect to tropical multiplication $\odot$.
  Note, that there exists no unique additive inverse element.
  The monoid $\TT^n$ with componentwise tropical addition becomes a semimodule over $\TT$ via tropical scalar multiplication 
  \[
    \odot: \TT \times \TT^n \to \TT^n, \ (\lambda, v) \mapsto \lambda \odot v \coloneq (\lambda + v_1, \ldots, \lambda + v_n) = \lambda\one + v \ .
  \]

  Ardila and Klivans \cite[Theorem 3]{ArdilaKlivans} showed that $\mathcal{T}_X$ coincides exactly with a tropical linear space, namely the 
  the Bergman fan $\mathcal{B}(K_N) \subseteq \RR^n$ of the complete graphical matroid on $N$ elements.
  We omit its precise definition as it will not be relevant and would require a more extensive introduction to matroids in particular which is beyond the scope of what will follow. 
  
  The Bergman fan $\mathcal{B}(K_N)$ is a polyhedral complex, in particular, as the name suggests, a polyhedral fan.
  One often differentiates between two different fan structures of the Bergman fan, known as the \emph{coarse} and \emph{fine subdivision}.
  For the coarse subdivision, there exists a bijection between its cones and combinatorial tree types; see \cite[Proposition 3]{ArdilaKlivans}.  
  While known before, in \cite[Proposition 2]{Hampe:2015} Hampe gave a proof that $\mathcal{B}(K_N)$ is \emph{tropically convex},
  meaning that for $x, y \in \mathcal{B}(K_N)$ and $\lambda, \mu \in \RR$, the tropical linear combination
  \begin{equation}\label{eq:tropical-convexity}
    \lambda \odot x \oplus \mu \odot y \ = \ \max\{x + \lambda \one, y + \mu \one\}
  \end{equation}
  is again an element of $\mathcal{B}(K_N)$.
  This implies that $\mathcal{B}(K_N)$ has a \emph{lineality space} of dimension $\geq 1$ and contains the line spanned by $\one$, i.e., 
  for any point $x \in \mathcal{B}(K_N)$ the whole line $x + \RR\one$ is contained in $\mathcal{B}(K_N)$.
  Therefore its quotient $\bergman{N} \coloneq \mathcal{B}(K_N)/\RR\one$ is well defined and remains tropically convex.
  
  \begin{remark}
    The motivation behind the tropical torus $\torus{n}$ comes from tropical convexity. 
    In fact any set $C \subseteq \RR^n$ satisfying \zcref{eq:tropical-convexity} for any $x, y \in C$ contains the line spanned by $\one$ in its lineality space.
    Hence we always consider tropically convex sets in the tropical torus $\torus{n}$.
  \end{remark}
  
  In the following we will only consider equidistant trees.
  Mostly, we will be considering equivalence classes $t = \t + \RR\one \in \bergman{N} \subseteq \torus{n}$.
  For brevity, we also refer to $\bergman{N}$ as the Bergman fan. 
  
  \subsection{Tropical median consensus tree}\label{sec:consensus}
  A consensus tree of a set of phylogenetic trees $\mathcal{S} \subseteq \torus{n}$ aims to best represent all trees in $\mathcal{S}$.
  One candidate, the \emph{tropical median consensus tree}, was introduced by \textcite[Definition 20]{ComaneciJoswig:24}; see \zcref{def:tropical-median-consensus} below.
  It is defined in terms of the asymmetric tropical distance as follows.
  The quasi-metric $\tropdist$ on $\RR^n$ from \eqref{eq:def-tropdist} is invariant under tropical scalar multiplication;
  i.e., $\tropdist(x + \lambda \mathbb{1}, y + \mu \mathbb{1}) = \tropdist(x, y)$ for every $\lambda, \mu \in \RR$.
  Therefore, it induces a quasi-metric $\tropdist$ on the tropical torus $\torus{n}$
  which we call the \emph{asymmetric tropical distance on $\torus{n}$}.
  
  The $\tropdist$-unit ball
  \[
    \Set{x \in \torus; \tropdist(x,0) \leq 1}
  \]
  in $\torus{n}$ is the standard simplex $\Delta + \RR\one = \conv\{e_1 + \RR\one, \ldots, e_n + \RR\one\}$,
  where the $e_i$ denote the standard basis vectors of $\RR^n$.
  For $x,y \in \torus{n}$, we have that
  \begin{equation}
  	\label{eq:tropdist-ball}
  	\tropdist(x,y) = \min\Set{\lambda \in \RR_{\geq 0}; y \in x + \lambda(\Delta+\RR\one)} \ .
  \end{equation}
  The \enquote{amount of asymmetry} of $\tropdist$ is measured by its \emph{skewness}
  \begin{equation}\label{eq:skewness}
    \sigma_n \ = \ \min\Set[\big]{ \lambda \geq 1 ; \tropdist(x,y) \leq \lambda \cdot \tropdist(y,x) , \ \forall x,y \in \torus{n},\, x \neq y} \ ;
  \end{equation}
  see \cite{Plastria}.
  Clearly, $\Delta + \RR\one$ is compact.
  Hence we can apply \cite[Lemma 10]{Plastria} and obtain
  \begin{equation}\label{def:skewness}
    \sigma_n = \max\left\{\tropdist(x,0) \mid x \in \Delta + \RR\one\right\} = n-1 \ .
  \end{equation}

  \begin{remark}
    The skewness of the tropical asymmetric distance restricted to the Bergman fan $\bergman{N}$ remains $n-1$.
  	Too see this, consider for example $t = 0$ and $t' = -e_1$. 
    Since both satisfy \eqref{eq:three-point} and are therefore contained in $\bergman{N}$,
 	  we have $\tropdist(t',t) = n-1$.
    \zcref[S]{fig:bergmanfan} demonstrates this for $N=4$.
    With the trivial tree $0$ at the center, the figure displays an excerpt of $\bergman{4}$ 
    with the isolines $\tropdist(0, -) = 1,2,\dotsc$ in black and isolines $\tropdist(-, 0) = 1,2\dotsc$ in red. 
    The blue points correspond to trees at distance $1$ to $0$ and distance $5$ from $0$.
  \end{remark}
  
  \begin{figure}[t]
    \newsavebox\TreeTypeA
    \newsavebox\TreeTypeB
    \newsavebox\TreeTypeC
    \newsavebox\TreeTypeD
    \newsavebox\TreeTypeE
    \newsavebox\TreeTypeF
    \tikzset{T/.style={nodes={fill, inner sep=.5pt, shape=circle}}}
    \savebox\TreeTypeA{
      \tikz[T, level distance=3mm, sibling distance=1mm]
      \path
      node (root) {}
      child[level distance=2mm, sibling distance=1.5mm] {[level distance=1mm, sibling distance=1mm]
        node {}
        child {node {}}
        child {node {}}
      }
      child { node {} }
      child { node {} };
    }
    \savebox\TreeTypeB{
      \tikz[T, level distance=3mm, sibling distance=1mm]
      \path
      node (root) {}
      child[level distance=2mm, sibling distance=3mm] {[level distance=1mm, sibling distance=1mm]
        node {}
        child {node {}}
        child {node {}}
        child { node {}}
      }
      child { node {} };
    }
    \savebox\TreeTypeC{
      \tikz[T, level distance=3mm, sibling distance=1mm]
      \path
      node (root) {}
      child[sibling distance=1.5mm] {node {}}
      child[level distance=2mm] {[level distance=1mm, sibling distance=1mm] node {}
        child {node {}}
        child {node {}}
      }
      child[sibling distance=1.5mm] { node {} };
    }
    \savebox\TreeTypeD{
      \tikz[T, level distance=3mm, sibling distance=1mm]
      \path
      node (root) {}
      child { node {} }
      child[level distance=2mm, sibling distance=3mm] {[level distance=1mm, sibling distance=1mm]
        node {}
        child {node {}}
        child {node {}}
        child { node {}}
      };
    }
    \savebox\TreeTypeE{
      \tikz[T, level distance=3mm, sibling distance=1mm]
      \path
      node (root) {}
      child { node {} }
      child { node {} }
      child[level distance=2mm, sibling distance=1.5mm] {[level distance=1mm, sibling distance=1mm]
        node {}
        child {node {}}
        child {node {}}
      };
    }
    \savebox\TreeTypeF{
      \tikz[T, level distance=3mm, sibling distance=1mm]
      \path
      node (root) {}
      child { node {} }
      child { node {} }
      child { node {} }
      child { node {} };
    }
    \begin{tikzpicture}
      \node (I) {\includegraphics[width=4cm]{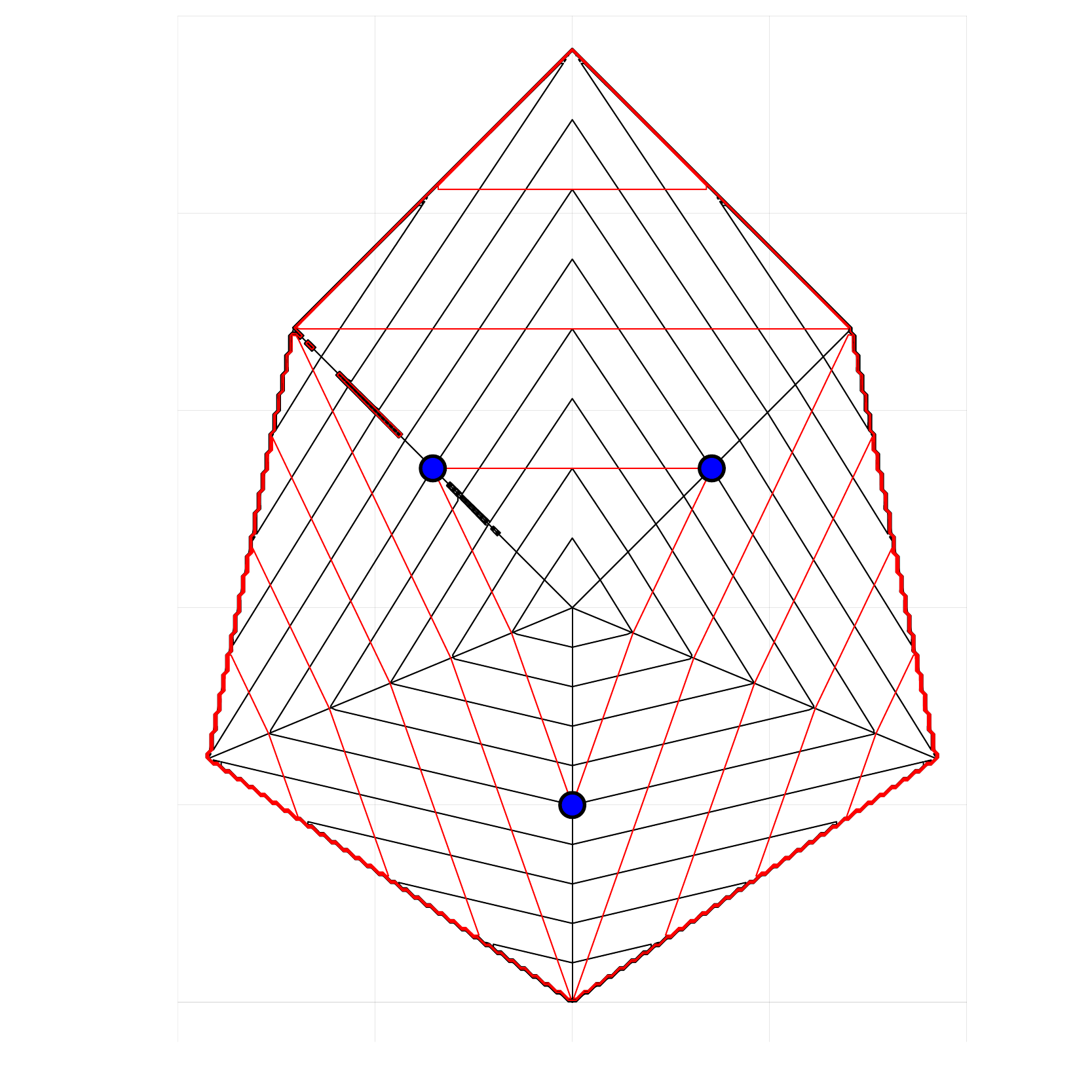}};
      \coordinate (c) at ([yshift=-3mm]$(I.south west)!.5!(I.north east)$);
      \node at ($(c)+(45:2)$) {\usebox\TreeTypeA};
      \node at ($(c)+(-22.5:2)$) {\usebox\TreeTypeB};
      \node at ($(c)+(-90:1.9)$) {\usebox\TreeTypeC};
      \node at ($(c)+(-157.5:2)$) {\usebox\TreeTypeD};
      \node at ($(c)+(135.5:2)$) {\usebox\TreeTypeE};
    \end{tikzpicture}
    \caption{%
      Excerpt from the Bergman fan $\bergman{4}$.
      The excerpt corresponds to the trees that have taxa $a$, $b$, $c$, $d$ in this order.
      Each cell corresponds to one combinatorial type of such a tree.
      The edges that join the cells correspond to degenerate trees as shown.
      The black lines are isolines of $\tropdist(0, -)$, where $0$ is the tree \usebox\TreeTypeF corresponding to the center,
      and the red lines are the isolines of $\tropdist(-, 0)$.
    }
    \label{fig:bergmanfan}
  \end{figure}
  
  We conclude with the following inequality.
  
  \begin{lemma}[Pseudo-triangle inequality]\label{lem:quasi-triangle}
    Let $x,y,z \in \torus{n}$, then 
    \[
      \frac{1}{n-1}\tropdist(x, y) \ \leq \ \frac{1}{n-1}\tropdist(x, z) + \tropdist(y, z) \ .
    \]
  \end{lemma}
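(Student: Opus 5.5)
Multiplying the claimed inequality by $n-1$, it becomes
\[
  \tropdist(x,y) \ \leq \ \tropdist(x,z) + (n-1)\,\tropdist(y,z) \ .
\]
I will derive this from two facts already available: the triangle inequality \eqref{eq:tropdist-triangle-ineq} for $\tropdist$ and the skewness bound \eqref{def:skewness}.

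The first step is the triangle inequality through the intermediate point $z$:
\[
  \tropdist(x,y) \ \leq \ \tropdist(x,z) + \tropdist(z,y) \ .
\]

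The second step reverses the last term using the skewness. By the definition \eqref{eq:skewness} together with \eqref{def:skewness}, we have $\tropdist(z,y) \leq \sigma_n\,\tropdist(y,z) = (n-1)\,\tropdist(y,z)$ whenever $y \neq z$. If $y = z$, both sides vanish by positive definiteness. Substituting this into the first step gives the displayed inequality, and dividing by $n-1$ gives the statement.

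The only point that needs care is whether the skewness value $\sigma_n = n-1$ in \eqref{def:skewness} may be used as stated. It could be that the appeal to \cite[Lemma 10]{Plastria} seems insufficient. In that case I would check the bound $\tropdist(z,y) \leq (n-1)\tropdist(y,z)$ directly from \eqref{eq:def-tropdist}.

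Set $w = y - z$, $M = \max_i w_i$ and $m = \min_i w_i$. Then $\tropdist(y,z) = nM - \sum_i w_i$, and $\tropdist(z,y) = \sum_i w_i - nm$.

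Among the values $M - w_i$, one equals $M - m$ (at the index where $w$ is minimal), and the rest are non-negative. Hence
\[
  \tropdist(y,z) = \sum_i (M - w_i) \ \geq \ M - m \ .
\]
Among the values $w_i - m$, each is at most $M - m$, and at least one equals $0$. Hence
\[
  \tropdist(z,y) = \sum_i (w_i - m) \ \leq \ (n-1)(M - m) \ .
\]
Combining the two bounds gives $\tropdist(z,y) \leq (n-1)\,\tropdist(y,z)$, so the argument does not depend on the citation.
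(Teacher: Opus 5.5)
Your proof is correct and follows the same route as the paper: apply the triangle inequality through $z$, then use the skewness bound $\tropdist(z,y) \leq (n-1)\,\tropdist(y,z)$ to reverse the last term. Your direct check of that bound from \eqref{eq:def-tropdist} is also correct, and it makes the argument independent of the citation to Plastria, on which the paper's proof relies.
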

  \begin{proof}
    We have the triangle inequality
    \[
      \frac{1}{n-1}\tropdist(x, y) \ \leq \ \frac{1}{n-1}\tropdist(x, z) + \frac{1}{n-1}\tropdist(z, y) \ .
    \]
    Using that $\tropdist$ has skewness $n-1$, we obtain
    \begin{equation*}
     	\frac{1}{n-1}\tropdist(x, y) + \frac{1}{n-1}\tropdist(z, y) \ \leq \ \frac{1}{n-1}\tropdist(x, z) + \tropdist(y, z).
      \qedhere
    \end{equation*}
  \end{proof}

  Next we give and explicit description of how the tropical median consensus tree $t$ of a set of equidistant trees $\mathcal{S}$ is computed. 
  First, we note that $t$ minimizes the total distance
  \begin{equation}\label{eq:fw-set}
    \sum_{s \in \mathcal{S}} \tropdist(s,t)
  \end{equation}
  from the trees $s \in \mathcal{S}$ to $t$.
  This characterization alone does not render $t$ unique.
  In fact, the dimension of the set of points which minimize \zcref{eq:fw-set}, called the \emph{Fermat--Weber set} $\FW(\mathcal{S})$, is bounded by 
  \begin{equation}\label{eq:dim_trop_median}
    \min\Set[\big]{N-1, \gcd(\abs{\mathcal{S}},n)} - 1 \ .
  \end{equation}
  It can be computed with a linear program in polynomial time \cite[Corollary 16]{ComaneciJoswig:24}. 
  Geometrically, the Fermat--Weber set $\FW(\mathcal{S})$ is a \emph{polytrope} \cite[Theorem 4]{ComaneciJoswig:24},
  i.e., a classical polytope which in addition is also tropically convex.
  
  Like a classical convex hull, the Fermat--Weber polytrope $\FW(\mathcal{S})$ has a minimal generating set $v_1, \dotsc, v_l$, called \emph{tropical vertices}, 
  such that any point in $\FW(\mathcal{S})$ is a tropical linear combination of $v_1, \dotsc, v_l$.
  This allows us to give a unique definition of the tropical median consensus tree. 
  \begin{definition}\label{def:tropical-median-consensus}
    The \emph{tropical median consensus tree} of $\mathcal{S}$ is the ordinary, componentwise mean of the tropical vertices of $\FW(\mathcal{S})$.
    More explicitly, it is the tree
    \[
      \frac{1}{l}\sum_{i=1}^{l} v_i \ .
    \]
  \end{definition}
  
  \begin{remark}\label{rm:trop_convex_median}
    Com\u aneci and Joswig showed that any point in the Fermat--Weber set $\FW(\mathcal{S})$ can be written as a tropical linear combination of the points in $\mathcal{S}$; see \cite[Theorem 4]{ComaneciJoswig:24}.
    This is particularly true for the tropical median consensus tree of $\mathcal{S}$.
  \end{remark}
  
  \begin{remark}
    There does exist a metric on the tropical torus $\torus{n}$, called the \emph{symmetric tropical distance} 
    \begin{equation}
      \label{eq:sym_tropdist}
      \symdist(x, y) \ = \ \max_{i  \in [n]}\{x_i - y_i\} - \min_{j  \in [n]}\{x_j - y_j\} \ .
    \end{equation}
    Although one might be inclined towards the symmetric distance, the asymmetric distance has certain theoretical and practical advantages.
    Most notably, contrary to the symmetric distance, there exists a consensus method for equidistant phylogenetic trees. 
    
    Note that another way to determine the skewness \zcref{def:skewness} of the tropical distance is to compare it with the symmetric tropical distance \eqref{eq:sym_tropdist}; see \cite[Section 2]{ComaneciJoswig:23}.    
  \end{remark}
  
  We now have all the necessary tools required to define tropical $k$-means clustering.
  
  \section{tropical \K-means++ clustering}\label{sec:tropical-k-means}
  In this section we will introduce the \emph{tropical $k$-means++} algorithm.
  We start by recalling the $k$-means clustering.
  
  For a subset $S \subseteq V$ of \emph{sites} in a quasi-metric space $(V,d)$,
  $k$-means clustering aims at partitioning $S$ into $k$ disjoint subsets $C_1,\dotsc,C_k$, called \emph{clusters},
  by finding $k$ points $c_1,\dotsc,c_k \in V$, called centroids, that minimize the objective function
  \begin{equation}
  	\label{eq:clustering-objective-func}
  	\sum_{s \in S} \min_{i\in[k]} d(s, c_i)^q.
  \end{equation}
  The clusters are the \emph{Voronoi regions} $C_i = \Vor_{c_i}(S) = \Set{s \in S; d(s, c_i)^q \leq d(s, c_j)^q \ \forall \ j}$.
  
  The problem cannot be solved analytically in practice.
  By using the following iterative algorithm due to Lloyd~\cite{Lloyd:1982}, it is possible to find a local optimum:
  choose initial values for $c_1,\dotsc,c_k \in S$, and run the following steps in turns:
  \begin{enumerate}[label=\arabic*.]
  	\item Set $C_i \coloneqq \Vor_{c_i}(S)$. We call this the \emph{A-step}.
  	\item Set $c_i \coloneqq \argmin_{c \in S} \sum_{s \in C_i} d(s, c)^q$. We call this phase the \emph{M-step}.
  \end{enumerate}
  The algorithm terminates if either step does not change any $C_i$ or $c_i$, respectively.
  If $S$ is finite, the above iterative algorithm is easily seen to terminate,
  though, possibly, in a local optimum.
  
  
  \subsection{Tropical clustering}
  For us, the sites we consider correspond to equidistant phylogenetic trees.
  Hence we consider points in the tropical torus $\torus{n}$ and apply the tropical asymmetric distance $\tropdist$.
  
  \begin{algorithm}
    \caption{tropical $k$-means clustering}
    \label{al:kmedian}
    \begin{algorithmic}[1]
      \REQUIRE Dataset $S \subseteq \torus{n}$ of finitely many points, number of clusters $k$
      \ENSURE Cluster assignments for each $s$ and centroids $c_1, c_2, \dots, c_k$
      
      \STATE Initialize centroids $c_1, c_2, \dots, c_k$ (e.g., randomly select $k$ points from $S$)
      \REPEAT
      \FOR{each $s \in S$}
      \STATE Assign $s$ to cluster $C_j$ with closest centroid:\\
      \hspace{0.5cm} $C_j \gets \Set{s \in S; j = \arg\min_{i \in [k]} \tropdist(s, c_i)}$
      \ENDFOR
      \FOR{each cluster $C_j$}
      \STATE Update centroids:\\
      \hspace{0.5cm} $c_j \gets \textrm{tropical median of }C_j$
      \ENDFOR
      \UNTIL{cluster assignments do not change}
    \end{algorithmic}
  \end{algorithm}
  
  Analogously to \cite[\S 3.6]{MacQueen:1967}, one can show that \zcref{al:kmedian} terminates after finitely many steps.
  For completeness we include a proof.
  
  \begin{proposition}\label{prop:convergence}
    The tropical $k$-means clustering algorithm terminates.
  \end{proposition}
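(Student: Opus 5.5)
Since $S$ is finite, there are only finitely many partitions of $S$ into at most $k$ clusters. Our plan is therefore to show that the loss cannot strictly decrease forever while the assignments keep changing. We use the objective
\[
  L(C_1,\dotsc,C_k;c_1,\dotsc,c_k) \ = \ \sum_{j=1}^k \sum_{s\in C_j} \tropdist(s,c_j),
\]
and show that each step of \zcref{al:kmedian} does not increase $L$.

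\emph{A-step.} With the centroids fixed, reassigning each $s$ to a cluster whose centroid minimizes $\tropdist(s,c_i)$ can only decrease or preserve the summand belonging to $s$. Hence $L$ does not increase. We fix a deterministic tie-breaking rule: keep $s$ in its current cluster whenever that cluster is among the minimizers. With this rule, a change of assignment happens only if the summand of some $s$ strictly decreases, so $L$ strictly decreases.

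\emph{M-step.} With the clusters fixed, the tropical median $c_j$ of $C_j$ lies in the Fermat--Weber set $\FW(C_j)$ by \zcref{def:tropical-median-consensus}. We need to justify this: $\FW(C_j)$ is a polytrope and thus convex, so the ordinary mean of its tropical vertices stays inside it. Consequently $c_j$ minimizes $\sum_{s\in C_j}\tropdist(s,c)$, and $L$ does not increase. Moreover, the median is a deterministic function of $C_j$ (the tropical vertices are unique), so the centroids are determined by the current partition.

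It follows that $L$ evaluated after each M-step is a function $\Phi(\mathcal{C})$ of the partition alone, namely $\Phi(\mathcal{C}) = \sum_j \min_c \sum_{s\in C_j}\tropdist(s,c)$. If one iteration changes the assignment, we get a strict decrease in the A-step followed by a non-increase in the M-step, so $\Phi$ strictly decreases. Therefore no partition can repeat. Since there are finitely many partitions, the assignments must stop changing after finitely many iterations, and the algorithm terminates.

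The main obstacle is the strictness argument: we must rule out cycling among partitions of equal loss. This is exactly why we insist on the tie-breaking convention and on the centroid being a function of the cluster. We also need the small convexity check that the median lies in $\FW(C_j)$. Empty clusters can be handled by leaving their centroid unchanged, which does not affect the argument.
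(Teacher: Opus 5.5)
Your proof is correct and follows the same route as the paper. Both arguments show that the loss does not increase in the A-step or in the M-step (the latter because the tropical median is a Fermat--Weber point), and then use that $S$ has only finitely many partitions; your tie-breaking convention and the potential $\Phi$ on partitions make rigorous the paper's unjustified claim that equality in the A-step forces $\mathcal{C}'=\mathcal{C}$, and your convexity check supplies the membership of the median in $\FW(C_j)$ that the paper takes for granted.
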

    
  \begin{proof}
  	Let $\mathcal{C} = \{C_1,\dotsc,C_k\} \subseteq 2^S$ be the set of clusters after the M-step of an arbitrary iteration,
  	and $c_i$ be the tropical median of $C_i$ for $i = 1,\dotsc,k$ be the respective tropical medians.
  	During the A-step, we choose the new clusters $C'_i = \Vor_{c_i}(S)$.
  	For $s \in S$, let $\gamma_s$ be the integer such that $s \in C_{\gamma_s}$,
  	and define $\gamma'_s$ analogously.
  	By definition of the Voronoi region, we have $\gamma'_s = \argmin_i \tropdist(s, c_i)$,
  	so for all $s$, we have $\tropdist(s, c_{\gamma'_i}) \leq \tropdist(s, c_j)$ for all $j$.
  	In particular, we get
    \begin{equation}
      \label{eq:loss-decrease-1}
      \ell(\mathcal{C}') \ = \ \sum_{s \in S} \tropdist(s, c_{\gamma'_s}) \ \leq\ \sum_{s \in S} \tropdist(s, c_{\gamma_s})\ = \ \ell(\mathcal{C})\ .
    \end{equation}
    Therefore, the total loss does not increase during the A-step,
    with equality if and only if $\mathcal{C}' = \mathcal{C}$.
    
    Second, the M-step replaces the centroids by the tropical median of $C_i$, denoted $c'_i$.
    Since the tropical median is a Fermat--Weber point, it satisfies $c'_i \in \argmin_c \sum_{s \in C'_i}\tropdist(s, c)$.
    Therefore, we obtain
    \begin{equation}
    	\sum_{s \in S} \tropdist(s, c'_{\gamma'_s}) \ \leq\ \sum_{s \in S} \tropdist(s, c_{\gamma'_s})\ \overset{\eqref{eq:loss-decrease-1}}{\leq} \ \sum_{s \in S} \tropdist(s, c_{\gamma_s})\ ,
    \end{equation}
    with equality if and only if $c'_i = c_i$ for all $i$, in which case the algorithm terminates.
    Since $S$ is finite, the algorithm terminates in finitely many steps.
  \end{proof}
  
  Like its Euclidean counterpart, the algorithm can be stuck in a local optimum as the following example demonstrates.
  \begin{example}\label{ex:local-optimum}
    Consider the points
    \begin{align*}
      v_1 &= \begin{psmallmatrix}
        1 \\ 3 \\ 0
      \end{psmallmatrix},& \ v_2 &= \begin{psmallmatrix}
        0 \\ 3 \\ 1
      \end{psmallmatrix},& \ v_3 &= \begin{psmallmatrix}
        0 \\ 0 \\ 1
      \end{psmallmatrix},& \ v_4 &= \begin{psmallmatrix}
        1 \\ 0 \\ 0
      \end{psmallmatrix};
    \end{align*}
		see Fig.~\ref{fig:local-optimum}.
    With $c_1 = v_1$ and $c_2 = v_2$ as initial centroids, the algorithm will terminate after the first iteration
    with clusters $C_1 = \{v_1, v_4\}$ and $C_2 = \{v_2,v_3\}$.
    The loss of this clustering $\mathcal{C} = \{C_1, C_2\}$ is $\ell(\mathcal{C}) = 6$.
    An optimal clustering $\mathcal{C}^* = \{C_1^*, C_2^*\}$ instead is $C_1^* = \{v_1,v_2\}$ and $C_2^* = \{v_3,v_4\}$.
    Their respective tropical medians are the points 
    \begin{align*}
      c^*_1 &= \begin{psmallmatrix}
        0 \\ 2 \\ 0
      \end{psmallmatrix},&
      c^*_2 &= \begin{psmallmatrix}
        1 \\ 0 \\ 1
      \end{psmallmatrix},
    \end{align*}
    resulting in $\ell(\mathcal{C}^*) = 4$.
    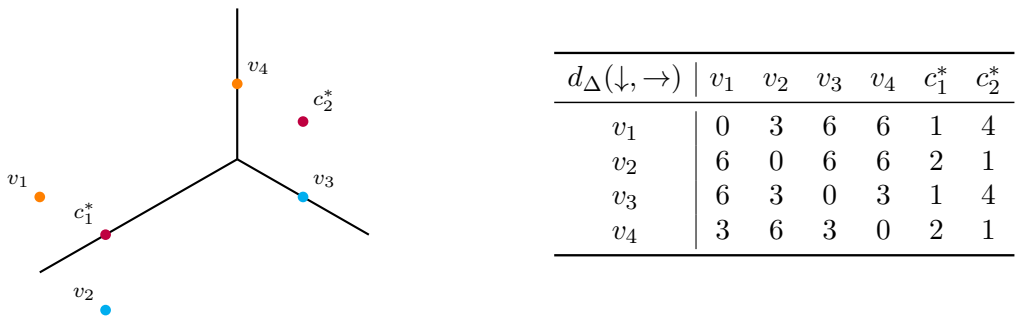
\begin{figure}
      \begin{tikzpicture}[scale=1, baseline={(current bounding box.center)}, nodes={font=\scriptsize}]
        \coordinate (e1) at (0,1);                       
        \coordinate (e2) at ({cos(210)},{sin(210)});     
        \coordinate (e3) at ({cos(330)},{sin(330)});
        
        \draw[thick] (0,0) -- ($2*(e1)$);
        \draw[thick] (0,0) -- ($3*(e2)$);
        \draw[thick] (0,0) -- ($2*(e3)$);
          
        \coordinate (v1) at ($(e1)+3*(e2)$);
        \coordinate (v2) at ($3*(e2)+(e3)$);
        \coordinate (v3) at (e3);
        \coordinate (v4) at (e1);
          
        \foreach \p/\pos in {1/above left,4/above right}
          \fill[fill=orange] (v\p) circle (2pt) node[\pos] {$v_{\p}$};
            
        \foreach \p/\pos in {2/above left,3/above right}
          \fill[fill=cyan] (v\p) circle (2pt) node[\pos] {$v_{\p}$};
            
        \coordinate (a) at ($2*(e2)$);
        \coordinate (b) at ($-1*(e2)$);
          
        \foreach \p/\l/\pos in {a/c^*_1/above left,b/c^*_2/above right}
          \fill[fill=purple] (\p) circle (2pt) node[\pos] {$\l$};
      \end{tikzpicture}
      \hfill
      $\begin{array}{c|cccccc}
      	\toprule
      	d_\Delta(\downarrow,\rightarrow) & v_1 & v_2 & v_3 & v_4 & c^*_1 & c^*_2 \\
      	\midrule
      	v_1                              & 0   & 3   & 6   & 6   & 1     & 4     \\
      	v_2                              & 6   & 0   & 6   & 6   & 2     & 1     \\
      	v_3                              & 6   & 3   & 0   & 3   & 1     & 4     \\
      	v_4                              & 3   & 6   & 3   & 0   & 2     & 1     \\
      	\bottomrule
      \end{array}$
      \caption{A local optimum for the tropical $2$-median clustering, with centroids $c_1 = v_1$ and $c_2 = v_2$. The points $c^*_1$ and $c^*_2$ correspond to the optimal clustering.}
      \label{fig:local-optimum}
    \end{figure}
  \end{example}
  
  However it is possible to choose the initial centroids such that the loss of a local optimum is bounded by the loss of the global optimum.
  Before we discuss this modification we want to investigate the relation of classical and tropical $k$-means algorithm in more depth.
  In fact tropical $k$-means does not arise as a tropicalized version of classical $k$-means algorithm in the Euclidean setting as the following example shows.
  \begin{example}
    This is a modified version of \cite[Example 1]{ComaneciJoswig:24}.
    Consider the points 
    \begin{align*}
      V_{\alpha} = \begin{pmatrix}
        14 & 13 & 11 - \frac{\alpha}{2} & 10 & 3 & 16\\
        -7 & -14 & -13 - \frac{\alpha}{2} & 1 & -3 & -1\\
        -7 & 1 & 2 + \alpha & -11 & 0 & -15
      \end{pmatrix}
    \end{align*}
    Let $v_1,\ldots, v_6$ denote their columns.
    For any $\alpha \geq 2$ the tropical median of $v_1,\ldots,v_5$ is $(9,-6,-3)^{\top}$.
    
    We cluster the set of points with tropical $k$-means clustering into two sets, i.e, $k = 2$.
    Choose $v_4$ and $v_6$ as initial centroids.  
    After the first A-step the points will be partitioned into the sets $\{v_1,v_2,v_3,v_4,v_5\}$ and $\{v_6\}$ for any $\alpha \geq 2$.
    Note that after replacing the initial centroids by the respective tropical median the partitioning remains unchanged after the second A-step.
    Thus \zcref{al:kmedian} terminates.
    
    In contrast, the tropicalized version of classical $k$-means replaces the tropical median by the point-wise maximum.
    We pick the same initial centroids as before.
    While the sets will be again partitioned into $\{v_1,v_2,v_3,v_4,v_5\}$ and $\{v_6\}$ after the first A-step,
    note that the point-wise maximum is highly sensitive to outliers.
    In fact, after the second assignment step the sets will be partitioned into $\{v_3\}$ and $\{v_1,v_2,v_4,v_5,v_6\}$.
    After replacing the centroids with the point-wise maximum the partition remains unchanged after the third A-step.
  \end{example}
  
  \subsection{\K-means++ clustering}
  As shown above, the local optimum $\mathcal{C}'$ Lloyd's algorithm converges to depends on the initial choice for the centroids.
  Additionally, the quotient $\ell(\mathcal{C}')/\ell(\mathcal{C}^*)$ of the loss of $\mathcal{C}'$ and of the global optimum $\mathcal{C}^*$ can be arbitrarily large.
  The \emph{$k$-means++ algorithm} \cite{ArthurVassilvitskii:2007} bounds the expectation value of $\ell(\mathcal{C}')/\ell(\mathcal{C}^*)$
  by combining Lloyd's algorithm with the following initial seeding for the centroids:
  choose $c_1 \in S$ uniformly at random, and for $j > 1$, choose $c_j \in S$ at random with probability $p(s) = \frac{\min_{i < j} d(s, c_i)}{\sum_{t \in S} \min_{i < j} d(t, c_i)}$.
  Then proceed with $k$-means clustering as above.
  With this seeding, one can bound, roughly speaking, how far from the global optimum a local optimum the algorithm converges to can be.
  To make that precise, we define the \emph{competitive factor} of a clustering $\mathcal{C}$ as $\frac{\ell(\mathcal{C})}{\ell(\mathcal{C}^*)}$,
  where $\mathcal{C}^*$ is the clustering for which the global optimum of \eqref{eq:clustering-objective-func} is attained.
  
  \begin{remark}
  	The competitive factor of Euclidean $k$-means++ algorithm is $8(2 + \log k)$ \cite{NielsenSun:2019}.
  \end{remark}
  
  The competitive factor of tropical $k$-means clustering follows directly from \cite[Theorem~2]{NielsenSun:2019} and \eqref{eq:skewness}.
  
  \begin{proposition}\label{thm:competitive}
    For a finite set of sites $S \subseteq \torus{n}$, the expectation value of the competitive factor of the clustering obtained by tropical $k$-means++ algorithm is at most $2n(2 + \log k)$.
  \end{proposition}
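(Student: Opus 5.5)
We will not redo the $k$-means++ analysis. Instead we will reduce the statement to the general result of Nielsen and Sun \cite[Theorem~2]{NielsenSun:2019}. That theorem treats $k$-means++ seeding for an arbitrary dissimilarity $D$ on a space $V$ that satisfies a relaxed triangle inequality. It says that if $D$ is a \emph{$\rho$-quasi-metric}, and the centroid of each cluster under $D$ is approximated within a factor $\kappa$ by a uniformly random point of that cluster, then the expected competitive factor of the seeding is bounded by an explicit function of $\rho$, $\kappa$ and $k$, namely of the form $2\rho(\ldots)(2+\log k)$. The plan is to determine the constants for $(\torus{n},\tropdist)$ and insert them.

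The first step verifies the quasi-metric hypothesis with the correct constant. Nielsen and Sun's condition is a two-sided relaxation: it bounds $D(x,y)$ by a combination of $D(x,z)$ and $D(z,y)$, and it also needs $D(y,x)\le\rho\,D(x,y)$. The asymmetry is controlled by the skewness \eqref{eq:skewness}. Since $\tropdist$ satisfies the genuine triangle inequality \eqref{eq:tropdist-triangle-ineq}, and by \eqref{def:skewness} we have $\tropdist(x,y)\le(n-1)\tropdist(y,x)$, the pseudo-triangle inequality of \zcref{lem:quasi-triangle} gives every reversed-order inequality needed, with constant $n-1$. Before continuing we will check that Nielsen--Sun's $\rho$ in the expected-cost bound enters as $\rho+1=n$, or as a constant at most $n$. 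This matches the target factor $2n(2+\log k)$ compared with the Euclidean $8(2+\log k)$.

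The second step is the seeding-lemma input. First, the loss of choosing a uniformly random point of an optimal cluster $A$ as its center must be comparable to the optimal one-center cost of $A$. Second, the $D$-weighted sampling step must be analysed. Both are carried out inside \cite{NielsenSun:2019} using only the relaxed triangle inequality. We will confirm that their argument does not use that the optimal centroid lies in $S$, or any symmetry beyond the skewness bound. We will also check that $q=1$ (Fermat--Weber) is the case they cover, so the cost of a cluster is a sum of distances and not squared distances.

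The third step is to note that Lloyd's iterations after seeding never increase the loss, by the proof of \zcref{prop:convergence}. Hence the final clustering $\mathcal{C}'$ has loss at most that of the seeding, and the bound on the seeding's expected competitive factor transfers to $\mathcal{C}'$.

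The main obstacle is the first step: matching the orientation conventions. Here the loss uses $\tropdist(s,c)$ with the site first, and our inequality in \zcref{lem:quasi-triangle} has to be aligned with the exact form of Nielsen--Sun's hypothesis so that the constant comes out as $n$ and not $n-1$ or $2(n-1)$. If the direct match fails, the fallback is to symmetrize: $\tropdist\le\symdist\cdot n$ and $\symdist\le\tropdist$, up to the known comparison in \cite{ComaneciJoswig:23}. This would give a possibly weaker constant but still of order $n$.
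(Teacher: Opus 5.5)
Your reduction is the paper's own one-line proof. \cite[Theorem~2]{NielsenSun:2019} is stated with two constants, a quasi-triangle constant $\kappa_1$ and a symmetry constant $\kappa_2$, and there is no separate centroid-approximation hypothesis. Taking $\kappa_1 = 1$ (from the genuine triangle inequality \eqref{eq:tropdist-triangle-ineq} at $q=1$) and $\kappa_2 = \sigma_n = n-1$ (from \eqref{eq:skewness}), the bound $2\kappa_1^2(1+\kappa_2)(2+\log k)$ becomes $2n(2+\log k)$.

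Both hypotheses are quantified over all ordered pairs and triples, so the orientation issue you flag is not an obstacle and the symmetrization fallback is unnecessary. Your third step, that Lloyd's iterations never increase the loss (proof of \zcref{prop:convergence}) so the seeding bound passes to the final clustering, is left implicit in the paper and is worth stating.
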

  Next, we aim to establish when tropical $k$-means++ is able to recover a partition of a set of equidistant trees.
  
  \subsection{Coarse type}
    We will show that tropical $k$-means++ clustering partitions a set of trees by combinatorial type,
    assuming the types are ``sufficiently pronounced''.
    However, a large heterogeneous data set of phylogenetic trees may contain trees of many different combinatorial types.
    This leads to the following coarsening of the combinatorial type.
  
%
%
  
  \begin{definition}
    For a tree $t$, we respectively write $\nu(t)$ and $\eta(t)$ for the minimal and maximal depth of the inner nodes of $t$.
    The \emph{coarse type} of a phylogenetic tree $t$ with taxa $X$
    is the unique coarsest partitioning $\mathcal{X} \subseteq 2^X$ of $X$ such that for all $X' \in \mathcal{X}$, all $a \in X'$ have a common ancestor of depth at least $\nu(t)$.
  \end{definition}
  
  We want to take a closer look at the set of trees $\Gamma \subseteq \bergman{N}$ with a given coarse type $\mathcal{X}$. 
  Note that an equidistant tree $t$ has coarse type $\mathcal{X} = \{X_i \mid i \in I\}$ if and only if there exits $h \in \RR$ such that
  \begin{enumerate}[label=\arabic*.,leftmargin=*]
    \item the distances of two leaves $a, b \in X_i$ for $i \in I$ is smaller than $2h$ and
    \item the distance of two leaves $a \in X_i$, $b \in X_j$ for $i,j \in I, i \neq j$ is equal to $2h$.
  \end{enumerate} 
  This gives us another way to write the set $\Gamma$. 
  Namely, $\Gamma$ is the set of trees $t \in \bergman{N}$ which satisfy 
  \begin{multline}\label{eq:coarsetopology}
    \max\{t_{ab} \mid a, b \in X_i\} <\min\{t_{ab} \mid a \in X_i, b \in X_j, i \neq j\} \\
    = \max\{t_{ab} \mid a \in X_i, b \in X_j, i \neq j\} \ .
  \end{multline} 
  
  \begin{proposition}
    The set $\Gamma$ is tropically convex.
  \end{proposition}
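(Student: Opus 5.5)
We verify the definition of tropical convexity directly. Take $t, t' \in \Gamma$ and $\lambda, \mu \in \RR$, and set $u = \max\{t + \lambda\one, t' + \mu\one\}$. We must show $u \in \Gamma$. The argument has two parts: $u$ lies in $\bergman{N}$, and $u$ satisfies \eqref{eq:coarsetopology} for the same partition $\mathcal{X} = \{X_i\}$.

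The first part is immediate. The Bergman fan is tropically convex by \cite[Proposition 2]{Hampe:2015}, as recalled in \eqref{eq:tropical-convexity}. Hence $u \in \bergman{N}$, and $u$ is an equidistant tree.

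For the second part, write $h$ and $h'$ for the common between-block values of $t$ and $t'$. Condition \eqref{eq:coarsetopology} says that every between-block coordinate of $t$ equals $h$, and every within-block coordinate of $t$ is strictly less than $h$. The analogous statements hold for $t'$ with $h'$. Replacing $t$ by $t + \lambda\one$ and $t'$ by $t' + \mu\one$ shifts $h$ to $h + \lambda$ and $h'$ to $h' + \mu$ and preserves the strict inequalities. We may therefore assume $\lambda = \mu = 0$.

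\emph{Between blocks.} For $a \in X_i$, $b \in X_j$ with $i \neq j$, we get $u_{ab} = \max\{h, h'\} =: H$. This value does not depend on the pair, so the equality $\min = \max$ in \eqref{eq:coarsetopology} holds for $u$.

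\emph{Within blocks.} For $a, b \in X_i$, we have $u_{ab} = \max\{t_{ab}, t'_{ab}\}$. Since $t_{ab} < h \le H$ and $t'_{ab} < h' \le H$, it follows that $u_{ab} < H$. Because there are finitely many pairs, the maximum over within-block pairs is still strictly less than $H$. This gives the strict inequality in \eqref{eq:coarsetopology}.

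So $u$ satisfies \eqref{eq:coarsetopology} with partition $\mathcal{X}$, and therefore $u \in \Gamma$. The characterization \eqref{eq:coarsetopology} of $\Gamma$ is stated before the proposition, so we may use it here.

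Two edge cases need checking. If some block $X_i$ is a singleton, it contributes no within-block pairs, so the strict inequality is vacuous for that block. If $\mathcal{X}$ has only one block, there are no between-block pairs. That case is excluded, because $\nu(t)$ is attained at the root's depth, which forces at least two blocks.

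The main subtlety is not the computation but the setting. Tropical convexity of $\Gamma$ has to be read in $\torus{n}$, so we use representatives and must check that the argument is independent of that choice. It is, because \eqref{eq:coarsetopology} is invariant under adding multiples of $\one$, and the argument above handles arbitrary $\lambda$ and $\mu$ anyway.
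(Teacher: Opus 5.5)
Your proof is correct and is essentially the paper's argument: you get tropical convexity of the Bergman fan from Hampe, then verify \eqref{eq:coarsetopology} for $\max\{t+\lambda\one,\, t'+\mu\one\}$ by noting that all between-block coordinates equal $\max\{h+\lambda,\, h'+\mu\}$ while every within-block coordinate is strictly smaller (your explicit value $H$ just spells out the step the paper calls immediate). The one weak point is your aside on the one-block case: under the paper's literal definition the root has depth $0$, so ``$\nu(t)$ is attained at the root'' would not force two blocks; the right reason is that the root has at least two children, and in any case this remark is not needed for the main argument.
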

  
  \begin{proof}
    First, we note that the Bergman fan $\mathcal{B}(K_{N})$ is tropically convex; see \cite[Proposition 2.15]{Hampe:2015}. 
    The set of trees $\Gamma$ with same coarse type is thus tropically convex if any tropical convex combination of two trees $s,t \in \Gamma$ satisfies \zcref{eq:coarsetopology}.
    Let $\lambda, \mu \in \RR$ and 
    \[
    z \ \coloneq \ \max\{s + \lambda \one, t + \mu \one\}
    \]
    be a tropical convex combination of $s$ and $t$.
    It follows immediately that
    \begin{align*}
      \min\{z_{ab} \mid a \in X_i, b \in X_j, i \neq j\} = \max\{z_{ab} \mid a \in X_i, b \in X_j, i \neq j\} \ .
    \end{align*}
    Next there exists $\tilde{a}, \tilde{b} \in X_k$ for some $k \in I$ such that 
    \[
    \max\{z_{ab} \mid a, b \in X_i\} = z_{\tilde{a}\tilde{b}} = \max\{s_{\tilde{a}\tilde{b}}+\lambda, t_{\tilde{a}\tilde{b}}+\mu\} \ .
    \]
    Clearly, we have
    \[
    z_{\tilde{a}\tilde{b}} = \max\{s_{\tilde{a}\tilde{b}}+\lambda, t_{\tilde{a}\tilde{b}}+\mu\} < \max\{s_{ab} + \lambda, t_{ab} + \mu \} = z_{ab} \ .
    \] 
    for every $a \in X_i$ and $b \in X_j$ with $i \neq j$.
    This implies that $z$ is contained in $\Gamma$ which shows the statement.
  \end{proof}
  
  Together with \zcref{rm:trop_convex_median} the previous result implies that this consensus tree of $\mathcal{S}$ has the same coarse type if all trees in $\mathcal{S}$ have the same coarse type.
  
  \begin{corollary}
    \label{thm:coarse-type-median}
    Let $\mathcal{S}$ be a set of equidistant trees with the same coarse type.
    The tropical consensus tree of $\mathcal{S}$ has the same coarse type as the trees in $\mathcal{S}$.
  \end{corollary}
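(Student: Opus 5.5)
The proof combines the tropical convexity of $\Gamma$, established in the preceding proposition, with \zcref{rm:trop_convex_median}.

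Let $\mathcal{X}$ be the common coarse type of the trees in $\mathcal{S}$, and let $\Gamma \subseteq \bergman{N}$ be the set of trees of coarse type $\mathcal{X}$. By hypothesis $\mathcal{S} \subseteq \Gamma$. By \zcref{rm:trop_convex_median}, every point of $\FW(\mathcal{S})$ is a tropical linear combination of points of $\mathcal{S}$. We extend the two-point statement of the previous proposition to finite combinations by induction on the number of terms: $\bigoplus_{s} \lambda_s \odot s = \bigl(\bigoplus_{s \neq s_0} \lambda_s \odot s\bigr) \oplus \lambda_{s_0}\odot s_0$, and $\Gamma$ is invariant under adding multiples of $\one$. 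This gives $\FW(\mathcal{S}) \subseteq \Gamma$, and in particular every tropical vertex $v_1,\dots,v_l$ of $\FW(\mathcal{S})$ lies in $\Gamma$.

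The main obstacle is that, by \zcref{def:tropical-median-consensus}, the consensus tree is the \emph{ordinary} mean $\frac1l\sum_i v_i$. Tropical convexity of $\Gamma$ does not by itself give closure under ordinary averaging. We handle this in one of two ways.

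The first is to argue that the mean lies in $\FW(\mathcal{S})$. Since $\FW(\mathcal{S})$ is a polytrope, it is classically convex, and the mean of points of a convex set stays in the set. Then the first paragraph places the mean in $\Gamma$. This is the route suggested by the remark preceding the corollary, and it is also implicit in the remark itself, which asserts that the consensus tree is a tropical combination of $\mathcal{S}$.

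The second is to check directly that $\Gamma$ is closed under ordinary convex combinations. Write $v = \frac1l\sum v_i$. For pairs across different blocks, each $v_i$ takes a constant value $h_i$ on all such pairs, so $v$ takes the constant value $\frac1l\sum h_i$ there. For a pair $a,b$ inside a block, each $(v_i)_{ab} < h_i$, so $v_{ab} < \frac1l\sum h_i$. Hence $v$ satisfies \zcref{eq:coarsetopology}.

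It remains to show $v \in \bergman{N}$. This follows because $v$ lies in the classically convex set $\FW(\mathcal{S})$, which is contained in $\bergman{N}$ since $\bergman{N}$ is tropically convex and $\FW(\mathcal{S})$ consists of tropical combinations of points of $\mathcal{S} \subseteq \bergman{N}$.

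Combining these steps, the tropical median consensus tree lies in $\Gamma$, that is, it has coarse type $\mathcal{X}$.
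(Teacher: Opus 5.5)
Your proof is correct and follows the paper's argument: tropical convexity of $\Gamma$ combined with \zcref{rm:trop_convex_median}. You also make explicit two points the paper leaves implicit, namely the induction from two-term to finite tropical combinations and the fact that the ordinary mean of the tropical vertices stays in the classically convex polytrope $\FW(\mathcal{S})$; your second route is valid but redundant, since it already uses $\frac1l\sum_i v_i \in \FW(\mathcal{S})$, and that together with $\FW(\mathcal{S}) \subseteq \Gamma$ finishes the proof.
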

  
  Additionally, we can bound the distance of two trees of the same coarse type as follows.
  
  \begin{lemma}\label{lem:upper-bound}
    Let $t, t' \in \Gamma$ two trees with same coarse type $\{X_i \mid i \in I\}$. 
    Additionally let $\Omega > \omega > 0$ such that
    \[
    \max\{\eta(t),\eta(t')\}\leq\Omega \quad \text{and} \quad \min\{\nu(t), \nu(t')\}\geq\omega \ .
    \]
    Then the tropical asymmetric distance of $t$ and $t'$ is 
    \[
    \tropdist(t, t') \leq \left(n + \iota\right)2(\Omega - \omega) 
    \]
    with $\iota \coloneq \sum_{i \in I} \binom{\abs{X_i}}{2} \leq (2n - N + 1)2(\Omega - \omega)$.
    If $t, t'$ are both binary trees then
    \[
    \tropdist(t, t') \leq (2n - N + 1)2(\Omega - \omega) \ .
    \]
  \end{lemma}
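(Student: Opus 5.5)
The plan is to compute $\tropdist(t,t')$ coordinatewise from formula \eqref{eq:def-tropdist}, after fixing good representatives of $t$ and $t'$ in $\RR^n$. This is allowed because $\tropdist$ is invariant under adding multiples of $\one$ to either argument. The relevant coordinates are $t_{ab} = d_t(a,b)$, which equals twice the height of the lowest common ancestor of $a$ and $b$ above the leaves. Write $H$ for the common depth of the leaves of $t$. If we measure depth from the root, then $t_{ab} = 2\bigl(H - \operatorname{depth}(\operatorname{lca}(a,b))\bigr)$.

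\emph{Step 1: normalize.} We shift $t'$ by a multiple of $\one$ so that both trees have the same leaf depth $H$. By the characterization of the coarse type preceding \eqref{eq:coarsetopology}, every cross-block entry $t_{ab}$ with $a\in X_i$, $b\in X_j$, $i\neq j$, equals the common maximal value $2h$. I want to choose the shift so that this cross-block value agrees for $t$ and $t'$, i.e.\ both cross-block values are $2(H-\rho)$ with $\rho$ the common reference depth. Then $t_{ab}-t'_{ab}=0$ for all cross-block pairs.

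\emph{Step 2: bound the within-block entries.} For $a,b$ in the same block $X_i$, the lowest common ancestor is an inner node whose depth lies in $[\nu,\eta]\subseteq[\omega,\Omega]$ for both trees. Hence $t_{ab},t'_{ab}\in[2(H-\Omega),\,2(H-\omega)]$, and so $\abs{t_{ab}-t'_{ab}}\leq 2(\Omega-\omega)$.

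\emph{Step 3: plug into \eqref{eq:def-tropdist}.} Only the $\iota=\sum_i\binom{\abs{X_i}}{2}$ within-block coordinates contribute to $\sum(t'_{ab}-t_{ab})$, so this sum is at most $2\iota(\Omega-\omega)$. The maximum $\max\{t_{ab}-t'_{ab}\}$ is at most $2(\Omega-\omega)$; it is nonnegative because the cross-block coordinates give $0$. Multiplying it by $n$ and adding yields $\tropdist(t,t')\le (n+\iota)\,2(\Omega-\omega)$.

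\emph{Step 4: the binary case.} A binary tree's root has exactly two children, so the coarse type consists of two blocks of sizes $p$ and $q$ with $p+q=N$. Then $\iota=\binom p2+\binom q2=n-pq$. Since $p,q\ge1$, we have $pq\ge N-1$, so $\iota\le n-N+1$ and $n+\iota\le 2n-N+1$. This gives the second bound.

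The main obstacle is the normalization in Step 1 and the depth conventions around it. One must check that the cross-block value, which is attained at the coarsest splitting level, can be simultaneously matched for $t$ and $t'$ by a single shift, while still keeping the within-block lowest common ancestors at depths in $[\omega,\Omega]$ relative to that same reference. I would first try to fix the representative by setting the cross-block entries equal to $0$ in both trees. Then every within-block entry is negative, lying in $[-2(\Omega-\omega),0)$ after measuring depths relative to the coarse splitting node. If the paper's depth convention instead measures from the root, I would reconcile it by noting that only differences of depths enter the bound.
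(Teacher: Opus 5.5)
Your argument is correct and is essentially the paper's proof: you split the coordinates into cross-block and within-block pairs, bound each term of the asymmetric distance formula, and in the binary case use the two-block split to get $\iota\le n-N+1$. The only difference is the choice of representatives: you shift to a common leaf depth so the cross-block terms vanish, whereas the paper normalizes $t,t'$ to heights $\eta(t),\eta(t')$ and lets the resulting $2g-2h$ terms cancel algebraically. The ``obstacle'' in your last paragraph is not one, because the lowest common ancestor of every cross-block pair is the root in both trees, so equal leaf depth already forces equal cross-block values. One small slip there: with cross-block entries set to $0$ the within-block entries lie in $[-2\Omega,-2\omega]$, not in $[-2(\Omega-\omega),0)$, which is harmless since only differences enter the bound.
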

  
  \begin{proof}
    Let $h \coloneqq \eta(t)$ and $g \coloneqq \eta(t')$.
    W.l.o.g., we may assume that $t$ has height $h$ and $t'$ has height $g$.
    We begin by showing the first inequality and deduce the second from there.
    
    First, if $a \in X_i$ and $b \in X_j$ with $i \neq j$ then $\abs{t_{ab} - t'_{ab}} = \abs{2h - 2g}$.
    Next, if $a, b \in X_i$, $i \in I$ we have $t'_{ab} - t_{ab} \leq 2(g - \omega)$ and $t_{ab} - t'_{ab} \leq 2(h - \omega)$.
    This allows us to determine $\max\nolimits_{a,b \in X}\{t_{ab} - t'_{ab}\}$.
    We have
    \[
    \max\nolimits_{a,b \in X}\{t_{ab} - t'_{ab}\} \leq \max\{2(h - \omega), 2h - 2g\} = 2h - 2 \omega \ .
    \]
    Putting this into the definition of $\tropdist$ gives
    \begin{align*}
      \tropdist(t, t') &= \sum_{a,b \in X} t'_{ab} - t_{ab} + n \max\nolimits_{a,b \in X}\{t_{ab} - t'_{ab}\} \\
      &= \sum_{a \in X_i, b \in X_j, i \neq j} t'_{ab} - t_{ab} + \sum_{a,b \in X_i, i \in I} t'_{ab} - t_{ab} + n \max\nolimits_{a,b \in X}\{t_{ab} - t'_{ab}\} \\
      &\leq (n - \iota) (2g - 2h) + \iota(2g -2 \omega) + n(2h - 2\omega) \\
      &= n(\underbrace{2g - 2\omega}_{\leq 2\Omega - 2\omega}) + \iota (\underbrace{2h - 2\omega}_{\leq2\Omega- 2\omega}) \leq (n + \iota) 2(\Omega - \omega) \ ,
    \end{align*}
    which shows the first inequality.
    
    If the trees are binary there exists $N' < N$ such that $\iota = \binom{N'}{2} + \binom{N-N'}{2}$.
    As a function in $N'$ on the interval $[1,N-1]$ it attains its maximum in $N' = 1$ and $N' = N-1$.
    Hence second inequality follows since $\iota \leq \binom{N-1}{2} = n - N + 1$.
  \end{proof}
  
  On the other hand, if the two trees $t$ and $t'$ have different coarse type, then $\tropdist(t, t') \geq 2(\eta(t) - \nu(t'))$.
  If the trees are binary, their coarse type ensures that they are further apart.
  
  \begin{lemma}\label{lem:lower-bound}
    Let $t \in \Gamma$ and  $t' \in \Gamma'$ be two binary trees with different coarse type.
    Additionally let $\Omega > \omega > 0$ such that
    \[
    \max\{\eta(t),\eta(t')\}\leq\Omega \quad \text{and} \quad \min\{\nu(t), \nu(t')\}\geq\omega \ .
    \]
    Then the tropical asymmetric distance of $t$ and $t'$ is 
    \[
    \tropdist(t, t') \geq \omega\bigl((3 - 2\tfrac{\Omega}\omega)n + \tfrac{3}{2}N - 2\bigr) \ .
    \]
  \end{lemma}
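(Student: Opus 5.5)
The plan is to work directly from the formula \eqref{eq:def-tropdist}. For binary trees the coarse type is just the root split: $t$ has coarse type $\{X_1,X_2\}$ and $t'$ has coarse type $\{Y_1,Y_2\}$, with $\{X_1,X_2\}\neq\{Y_1,Y_2\}$. As in the proof of \zcref{lem:upper-bound}, set $h\coloneqq\eta(t)$ and $g\coloneqq\eta(t')$ and normalise the representatives. Then $t_{ab}=2h$ for $a,b$ in different blocks of $t$, and $t'_{ab}=2g$ for $a,b$ in different blocks of $t'$. All other coordinates lie in $[2\omega,2h]$, respectively $[2\omega,2g]$. We then bound the two summands of
\[
\tropdist(t,t') \;=\; \sum_{a,b} (t'_{ab}-t_{ab}) \;+\; n\max_{a,b}\{t_{ab}-t'_{ab}\}
\]
separately.

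\emph{The max term.} Since the bipartitions differ, some block of $t'$, say $Y_1$, meets both $X_1$ and $X_2$. Hence there is a pair $a\in X_1$, $b\in X_2$ lying in the same block of $t'$. For this pair $t_{ab}=2h$ and $t'_{ab}$ is a within-block distance of $t'$, so $t'_{ab}\le 2g$. I would try to sharpen this using the minimal inner depth $\omega$, aiming for $t'_{ab}\le 2g-2\omega$-type control. The resulting bound should be
\[
n\max\{t_{ab}-t'_{ab}\}\;\ge\; n(2h - t'_{ab}),
\]
which after the normalisation contributes the $+3n\omega$-type part of the estimate.

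\emph{The sum term.} Each coordinate satisfies $t'_{ab}-t_{ab}\ge 2\omega-2\Omega$, which gives the crude bound $n(2\omega-2\Omega)$ and accounts for the $-2n\Omega$ in the statement. The extra $\omega(\tfrac32N-2)$ must come from pairs where the crude bound is far from tight. These are:
\begin{itemize}
\item the pairs separated by the root split of $t'$, where $t'_{ab}=2g$ is maximal; there are $|Y_1||Y_2|\ge N-1$ of them;
\item the pairs separated in $t'$ but lying together in $t$, where $t_{ab}$ is small.
\end{itemize}
I will split the $n$ pairs according to the four combinations ``same/different block in $t$'' times ``same/different block in $t'$''. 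In each class I bound $t'_{ab}-t_{ab}$ below by an explicit affine expression in $h,g,\omega,\Omega$. I then sum, using $h,g\le\Omega$ and $h,g\ge\omega$ to eliminate $h$ and $g$.

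The main obstacle is the combinatorial count in the sum term. I need to show that, for two \emph{distinct} binary root splits of $X$, the number of "favourable" pairs is at least roughly $(N-1)+(\tfrac N2-1)=\tfrac32N-2$. The first summand counts the pairs separated by the root split of $t'$. The second counts pairs separated by one split but not the other, which one obtains by minimising $|X_i\cap Y_j|$-products over all split sizes. I would first handle the extremal case $|Y_1|=1$, i.e.\ a caterpillar-like top split, where the count is smallest. Then I would check that the function of the block sizes $|X_i\cap Y_j|$ is minimised there, similarly to the convexity argument for $\binom{N'}{2}+\binom{N-N'}{2}$ in the proof of \zcref{lem:upper-bound}. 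Finally, I would combine the two bounds and collect the terms in $n$, $N$, $\omega$ and $\Omega$ to obtain $\omega\bigl((3-2\tfrac{\Omega}{\omega})n+\tfrac32N-2\bigr)$.
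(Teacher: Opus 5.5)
Your plan has a genuine gap, starting in the analytic part. With the leaves of $t$ at depth $h=\eta(t)$, a pair in the same root block has its last common ancestor at depth in $[\omega,h]$. So $t_{ab}\in[0,2h-2\omega]$, not $[2\omega,2h]$; the latter is the range of twice the ancestor's depth. This correction makes your hoped-for estimate $t'_{ab}\le 2g-2\omega$ in the max term immediate. It also falsifies your crude bound $t'_{ab}-t_{ab}\ge 2\omega-2\Omega$: for $N=3$, $t=(t_{ab},t_{ac},t_{bc})=(0,2\Omega,2\Omega)$ and $t'=(2\Omega,0,2\Omega)$ give $t'_{ac}-t_{ac}=-2\Omega$.

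More importantly, bounding the two summands separately and eliminating $h,g$ in each cannot reach the statement. Write $A,B,C,D$ for the numbers of pairs separated by both root splits, only by that of $t'$, only by that of $t$, and by neither. The correct per-pair lower bounds on $t'_{ab}-t_{ab}$ are $2g-2h$, $2g-2h+2\omega$, $-2h$, $-2h+2\omega$. Hence the sum is at least $2g(A+B)-2hn+2\omega(B+D)$. Your pair (separated only by $t$) gives the max term at least $n(2h-2g+2\omega)$. The paper adds these \emph{before} eliminating anything. The $h$-terms then cancel exactly because $A+B+C+D=n$, leaving $2g(A+B-n)+2\omega(B+D+n)$. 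Using $\omega\le g\le\Omega$, this is at least $2\omega\bigl(B-C+(2-\tfrac\Omega\omega)n\bigr)$. Minimising the two terms independently (the sum at $h=\Omega$, $g=\omega$; the max at $h=\omega$, $g=\Omega$) costs $2n(\Omega-\omega)$. For even $N$ the paper's chain lands exactly on the stated right-hand side in the extremal configuration below, so there is no room for that loss. Normalising both trees to the common height $\Omega$ would be another way to avoid it.

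The combinatorial step is also aimed at the wrong quantity. The $\tfrac32N-2$ is not a count of favourable pairs, and your count is not bounded below by it. If $t'$ has root split $\{\{x\},X\setminus\{x\}\}$ and $t$ has root split $\{\{x,y\},X\setminus\{x,y\}\}$, then the pairs separated by $t'$, plus those among them lying together in $t$, number $(N-1)+1=N<\tfrac32N-2$ for $N>4$.

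What is needed is a lower bound on the quadratic quantity $B-C$. Partition $X$ into $Y_1,Y_2,Y',Y''$ (not your $Y_1,Y_2$) with $Y_1,Y_2\neq\emptyset$, such that $t$ has root split $\{Y_1\cup Y',Y_2\cup Y''\}$ and $t'$ has root split $\{Y_1\cup Y'',Y_2\cup Y'\}$. Put $N'=|Y'|+|Y''|\ge 1$. Then
$B-C=(|Y_1|-|Y_2|)(|Y'|-|Y''|)\ge -N'(N-N'-2)\ge-\tfrac14(N-2)^2$.
For even $N$ this is attained by a $(1,N-1)$ split of $t'$ against a balanced split of $t$.

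Since $(N-2)^2=2n-3N+4$, the term $-\tfrac14(N-2)^2$ equals $-\tfrac n2+\tfrac34N-1$. It lowers the coefficient of $n\omega$ from $4$ to $3$ and simultaneously produces $\omega(\tfrac32N-2)$. So the linear-in-$N$ correction comes from a quadratic estimate involving all four classes, not from counting pairs where a uniform crude bound is loose.
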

  \begin{proof}
    Like in the proof of \zcref{lem:upper-bound}
    let $h \coloneqq \eta(t)$ and $g \coloneqq \eta(t')$.
    Again, w.l.o.g., we may assume that $t$ has height $h$ and $t'$ has height $g$.
    
    Each coarse type determines a unique split $\{X_1, X_2\}$ of the set of taxa $X$,
    i.e., a partition of $X$ in two sets.
    Combining the splits of $\Gamma$ and $\Gamma'$, we obtain a partition of $X$ into subsets $Y_1, Y_2, Y', Y''$, where $Y_1, Y_2 \neq \emptyset$, such that $\{Y_1 \cup Y', Y_2 \cup Y''\}$ is the coarse type of $t$ and $\{Y_1 \cup Y'', Y_2 \cup Y'\}$ is the coarse type of $t'$.
    
    Let $N' = \abs{Y'} + \abs{Y''}$.
    Since $i \neq j$, we get $N' \geq 1$.
    The difference between the distances of taxa in $t$ and $t'$ can be estimated from below by
    \begin{equation*}
      t'_{ab} - t_{ab} \ \geq \ \begin{cases}
        2g - 2h           & \text{if $a \in Y_1, b \in Y_2$ or $a \in Y', b \in Y''$},                    \\
        2g - 2h + 2\omega & \text{if $a \in Y_1, b \in Y'$ or $a \in Y_2, b \in Y''$},                    \\
        -2h               & \text{if $a \in Y_1, b \in Y''$ or $a \in Y_2, b \in Y'$},                    \\
        -2h + 2\omega     & \text{if $a,b \in Y_1$ or $a,b \in Y_2$ or $a,b \in Y'$ or $a,b \in Y''$}.
      \end{cases}
    \end{equation*}
    Hence we can compute a lower bound of the distance from $t$ and $t'$ as follows. 
    \begin{align*}
      \tropdist(t, t') \ &= \ \sum\nolimits_{x,y \in \binom{X}{2}} (t'_{ab} - t_{ab}) + n\max\nolimits_{x,y \in \binom{X}{2}}\{t_{ab} - t'_{ab}\} \\
      &\geq\ A(2g - 2h) + (B+n)(2g - 2h + 2\omega) + C(-2h) + D(-2h + 2 \omega)  
    \end{align*}
    with 
    \begin{align*}
      A &\coloneq \abs{Y_1}\abs{Y_2} + \abs{Y'}\abs{Y''}, &
      B &\coloneq \abs{Y_1}\abs{Y'} + \abs{Y_2}\abs{Y''}, \\
      C &\coloneq \abs{Y_1}\abs{Y''} + \abs{Y_2}\abs{Y'}, &
      D &\coloneq \abs{Y_1}^2 + \abs{Y_2}^2 + \abs{Y'}^2 + \abs{Y''}^2.
    \end{align*}
    Note that $A + B + C + D = n$.
    We can rewrite the lower bound in terms of $g, h$ and $\omega$ and obtain
    \begin{align}
      \tropdist(t, t') & \geq 2g(A + B - n) - 2h(A + B + C + D - n) + 2\omega(B + D + n) \notag\\
      & \geq 2g(- n) + 2\omega(A + 2B + D + n)                          \notag\\
      & \geq 2\omega(A + 2B + D + (1-\tfrac\Omega\omega)n)            \notag\\
      & = 2\omega(B - C + (2-\tfrac\Omega\omega)n) \label{eq:lower-bound-N}
    \end{align}
    In the last estimate we applied $g \leq \tfrac\Omega\omega\omega$ again.
    
    In order to find the number $N'$ such that the lower bound \eqref{eq:lower-bound-N} becomes minimal we take a closer look at $B - C$:
    \begin{align}
      B - C & =  \abs{Y_1}\abs{Y'} + \abs{Y_2}\abs{Y''} - \abs{Y_1}\abs{Y''} - \abs{Y_2}\abs{Y'}                                                                                     \notag\\
      & \geq \underbrace{(\abs{Y'} + \abs{Y''})}_{N'} \bigl(\underbrace{\min\{\abs{Y_1}, \abs{Y_2}\}}_{\geq 1} - \underbrace{\max\{\abs{Y_1}, \abs{Y_2}\}}_{\leq N-N'-1}\bigr) \notag\\
      & \geq (N'-N+2)N'                                                                                                                                                        \notag\\
      & \geq -\tfrac14(N-2)^2 \label{eq:ineq-B-C}.
    \end{align}
    Recall that $n = \binom{n}{2}$ and thus $n-N+1 = \binom{N-1}{2}$ or, equivalently, $(N-2)^2 = 2n-3n+4$.
    We conclude the theorem by putting this into \eqref{eq:ineq-B-C} and \eqref{eq:lower-bound-N} and obtain
    \[
    \tropdist(t, t') \ \geq \ \omega \bigl((3 - 2\tfrac{\Omega}\omega)n + \tfrac{3}{2}N - 2\bigr) \ .\qedhere
    \]
  \end{proof}
  
  Finally, our previous work allows us to specify when we are able to recover a given partition of equidistant trees by coarse type.
   
	\begin{theorem}\label{thm:tropicalk-means}
		Let $\mathcal{S}$ be a set of equidistant, binary trees on the same $N$ taxa and $\mathcal{S} = \bigsqcup_{i=1}^k C_i$ be a partition of $\mathcal{S}$,
    such that two trees in $\mathcal{S}$ have the same coarse type if and only if they are contained in the same set $C_i$.
    We assume $N \geq 3$.
    Let $\omega$, $\Omega$ be such that $\nu(t) \geq \omega$ and $\eta(t) \leq \Omega$ for every tree $t \in \mathcal{S}$.
    If
    \begin{equation}
      \label{eq:tropicalk-means-assumption}
      \frac{\Omega}{\omega} \ < \ 1 \: + \: \frac{n+N-1}{4n(2n-N+1)(n-1)(\Omega - \omega) \abs{\mathcal{S}}},
    \end{equation}
    then in expectation, the tropical $k$-means++ clustering algorithm yields the clustering $\mathcal{C} \coloneqq \Set{C_i; i = 1,\dotsc,k}$
    into the original sets $C_i$.
	\end{theorem}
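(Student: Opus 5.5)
The plan is to compare the loss of the partition $\mathcal{C}=\{C_i\}$ with the loss of any other partition $\mathcal{C}'$. We then combine this with the competitive bound of \zcref{thm:competitive}: in expectation, tropical $k$-means++ ends at a clustering whose loss is at most $2n(2+\log k)$ times the optimum. So it suffices to show two things. First, $\mathcal{C}$ is the global optimum. Second, its loss is so small that any clustering that separates trees of different coarse type wrongly already exceeds $2n(2+\log k)\,\ell(\mathcal{C})$. The hypothesis \eqref{eq:tropicalk-means-assumption} is what forces $\Omega-\omega$ to be tiny relative to $\omega$, and hence makes $\ell(\mathcal{C})$ small.

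\textbf{Upper bound for $\ell(\mathcal{C})$.} By \zcref{thm:coarse-type-median}, the tropical median $c_i$ of $C_i$ has the same coarse type as the trees in $C_i$. Since it is a tropical linear combination of them (\zcref{rm:trop_convex_median}), we expect $\nu(c_i)\ge\omega$ and $\eta(c_i)\le\Omega$, after normalising the representative in $\torus{n}$. \zcref{lem:upper-bound} then gives, for binary trees, $\tropdist(s,c_i)\le(2n-N+1)2(\Omega-\omega)$ for every $s\in C_i$. The median may be nonbinary; if so, I would instead compare with $s'\in C_i$ itself, using that the median minimises the sum. Either way,
\[
\ell(\mathcal{C})\le \abs{\mathcal{S}}\,(2n-N+1)\,2(\Omega-\omega).
\]

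\textbf{Lower bound for a wrong clustering.} Let $\mathcal{C}'$ be any clustering that places some site $s$ with a centroid $c'$ whose coarse type differs from that of $s$. If the centroid is itself one of the sites, or has a coarse type realised by sites, then \zcref{lem:lower-bound} gives
\[
\tropdist(s,c')\ge \omega\bigl((3-2\tfrac{\Omega}{\omega})n+\tfrac32N-2\bigr),
\]
which under the hypothesis is roughly $\omega(n+\tfrac32N)$. We need this with the arguments in the right order; here the skewness, via \zcref{lem:quasi-triangle}, costs a factor $n-1$, and this explains the factor $(n-1)$ in \eqref{eq:tropicalk-means-assumption}. Rearranging \eqref{eq:tropicalk-means-assumption} as
\[
4n(2n-N+1)(n-1)(\Omega-\omega)^2\abs{\mathcal{S}}<\omega(n+N-1),
\]
I would then check that $2n(2+\log k)\cdot\ell(\mathcal{C})\cdot(n-1)$ stays below the single misassignment cost $\omega(\dots)$, treating the $\log k$ term by absorbing it into the slack. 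Consequently every clustering within the expected competitive factor agrees with $\mathcal{C}$, so in expectation the algorithm returns $\mathcal{C}$.

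\textbf{Main obstacle.} The delicate step is controlling centroids in the wrong clustering: they are medians of mixed sets, so they need not be binary and need not satisfy the depth bounds, and \zcref{lem:lower-bound} does not apply directly. I would first try to avoid them. For a site $s$ assigned to a foreign centroid $c'$, pick a site $s''\in C_j$ in the cluster of $c'$ of a different type and use \zcref{lem:quasi-triangle}:
\[
\tfrac{1}{n-1}\tropdist(s,s'')\le\tfrac1{n-1}\tropdist(s,c')+\tropdist(s'',c').
\]
Summing over the cluster, this bounds the mixed cluster's loss from below by $\tfrac{1}{n-1}$ times a site-to-site distance, to which \zcref{lem:lower-bound} applies. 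The bookkeeping of constants in this summation, so that it matches exactly the constant $4n(2n-N+1)(n-1)$, is where I expect the most fiddling.
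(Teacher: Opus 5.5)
\textbf{The comparison step does not go through.} Otherwise your route is the paper's. You bound $\ell(\mathcal{C})$ via \zcref{thm:coarse-type-median} and \zcref{lem:upper-bound}. Your fallback of comparing the median with a site $s'\in C_i$ is actually safer, since the paper tacitly assumes the median satisfies the depth bounds. You bound any other clustering from below by applying \zcref{lem:quasi-triangle} to a mixed pair: $\ell(\mathcal{C}')\ge\tropdist(s,c')+\tropdist(s'',c')\ge\frac{1}{n-1}\tropdist(s,s'')$. This gives the same bound as the paper's averaging over $\abs{C}-1$ mixed pairs in \zcref{claim:other-clustering}. Then you invoke \zcref{thm:competitive}. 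Drop the direct application of \zcref{lem:lower-bound} to $\tropdist(s,c')$; as you suspected, it is not available.

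Using $\ell(\mathcal{C})\le2(2n-N+1)(\Omega-\omega)\abs{\mathcal{S}}$ and the lower bound $\frac{\omega(n+N-1)}{n-1}$ for every other clustering, what you need is
\[
4n(2n-N+1)(n-1)(2+\log k)(\Omega-\omega)\abs{\mathcal{S}}<\omega(n+N-1).
\]
As you computed, \eqref{eq:tropicalk-means-assumption} only gives this with $(2+\log k)(\Omega-\omega)$ replaced by $(\Omega-\omega)^2$. No slack can absorb $2+\log k$. Rescaling all edge lengths by $\lambda>0$ multiplies all distances, medians, losses, $\omega$ and $\Omega$ by $\lambda$, and leaves the seeding probabilities unchanged. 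So the displayed inequality is scale-invariant, while the printed hypothesis holds for \emph{every} configuration once $\lambda$ is small enough. If the hypothesis implied the displayed inequality, every configuration would satisfy it, which is false when $\Omega/\omega$ is large. Hence this step cannot be carried out from the hypothesis as printed.

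The paper's proof gets through it only because its final chain \eqref{eq:exp-value-ineq} effectively uses a corrected hypothesis. So does the check $N-2\ge1>\dots(2+\log k)\abs{\mathcal{S}}$ in \zcref{claim:other-clustering}. The correction replaces the factor $(\Omega-\omega)$ in the denominator of \eqref{eq:tropicalk-means-assumption} by $(2+\log k)$. In that form the hypothesis rearranges exactly to the displayed inequality. Together with $N\ge3$ it also gives $\frac{\Omega}{\omega}-1<\frac{N-2}{4n}$. That is what turns your \enquote{roughly $\omega(n+\frac32N)$} into the precise bound $\omega\bigl((3-2\tfrac{\Omega}{\omega})n+\tfrac32N-2\bigr)>\omega(n+N-1)$ that you need. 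You should state explicitly that you adopt this reading of the hypothesis, rather than appeal to slack.
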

	\begin{proof}
		We prove the theorem follows by establishing the following claims:
		
		\begin{claim}
			The clustering $\mathcal{C}$ has loss $\ell(\mathcal{C}) \leq 2(2n-N+1)(\Omega - \omega) \abs{\mathcal{S}}$.
		\end{claim}
		\begin{claimproof}
			Let $\tropmedian{i}$ denote the tropical median of $C_i$, which has the same coarse type as the trees in $C_i$ by \zcref{thm:coarse-type-median}.
			From \zcref{lem:upper-bound}, we obtain that $\tropdist(t, \tropmedian{i}) \leq 2(2n-N+1)(\Omega - \omega)$ for all $t \in C_i$.
			Hence, we get the total loss 
			\[
				\ell(\mathcal{C}) = \sum_i \sum_{t \in C_i} \tropdist(t, \tropmedian{i}) \leq 2(2n-N+1)(\Omega - \omega) \abs{\mathcal{S}}.
			\]
		\end{claimproof}
		
		\begin{claim}
			\label{claim:other-clustering} Any clustering $\mathcal{C}'$ different than $\mathcal{C}$ has loss greater than $\frac{\omega(n+N-1)}{n-1}$.
		\end{claim}
		\begin{claimproof}
      Let $\tropmedian{C}$ denote the tropical median of a cluster $C \in \mathcal{C}'$.
			From \zcref{lem:quasi-triangle} we can deduce that
			\begin{align*}
			 & \phantom{{}={}} \sum_{C\in \mathcal{C}'}\sum_{t \in C} \tropdist(t, \tropmedian{C})																																				 \\
			 & = \sum_{C\in \mathcal{C}'} \frac{1}{\abs{C}-1} \sum_{\substack{s,t\in C\\t \neq t'}} \bigl(\tropdist(s, \tropmedian{C}) + \tropdist(t, \tropmedian{C}\bigr) \\
			 & \geq \frac{1}{n-1} \sum_{C\in \mathcal{C}'}\frac{1}{\abs{C} - 1}\sum_{s,t\in C} \tropdist(s,t) \ .
			\end{align*}
			Thus, any clustering $\mathcal{C}'$ other than $\mathcal{C}$ has at least one cluster $C$ containing a pair of elements, each from different sets $C_i$, $C_j$.
			One checks that if $C$ contains at least one such pair, then $\binom{C}{2}$ actually contains at least $\abs{C}-1$ such pairs.
			By \zcref{lem:lower-bound}, each of these pairs has tropical distance at least $\omega\bigl((3 - 2\tfrac{\Omega}\omega)n + \tfrac{3}{2}N - 2\bigr)$.
      We show that 
      \[
        \omega\bigl((3 - 2\tfrac{\Omega}\omega)n + \tfrac{3}{2}N - 2\bigr) > \omega(n + N - 1) \ ,
      \]
			or, equivalently, that $\frac{N-2}{4n} > \frac\Omega\omega - 1$.
      The latter inequality follows immediately from $N \geq 3$ since
      \[
      N - 2 \geq 1 > \frac{n - N + 1}{2n(n-1)(2 + \log k)\abs{\mathcal{S}}} \ . 
      \]
      Therefore, the loss of $\mathcal{C}'$ is bounded below by
			\[
				\ell(\mathcal{C}') = \sum_{C\in \mathcal{C}'}\sum_{x \in C} \tropdist(x, \tropmedian{C}) 
				> \frac{\omega(n-N+1)}{n-1} \ .
			\]
		\end{claimproof}
			
		To conclude this proof, recall from \zcref{thm:competitive} that the expectation value of the loss of the clustering obtained by tropical $k$-means++ is at most
		\begin{align}
			2n(2 + \log k)\ell^*
			\ &\leq \ 2n(2 + \log k)\ell(\mathcal{C}) \notag \\
			&= \ 4n(2n-N+1)(2 + \log k)(\Omega - \omega)\abs{\mathcal{S}}\label{eq:exp-value-ineq}\\
			&< \ \frac{\omega(n-N+1)}{n-1} \ . \notag
		\end{align}
		According to \zcref{claim:other-clustering}, any clustering different from $\mathcal{C}$ has loss greater than $\frac{\omega(n+N-1)}{n-1}$.
		Hence, in expectation value, tropical $k$-means++ will not produce a clustering different from $\mathcal{C}$.
	\end{proof}
  
  \begin{example}
  	\begin{figure}
  		\centering
  		\foreach \i in {1,...,4}{%
  			\subcaptionbox*{\i}{%
  				\includegraphics[width=\linewidth/4]{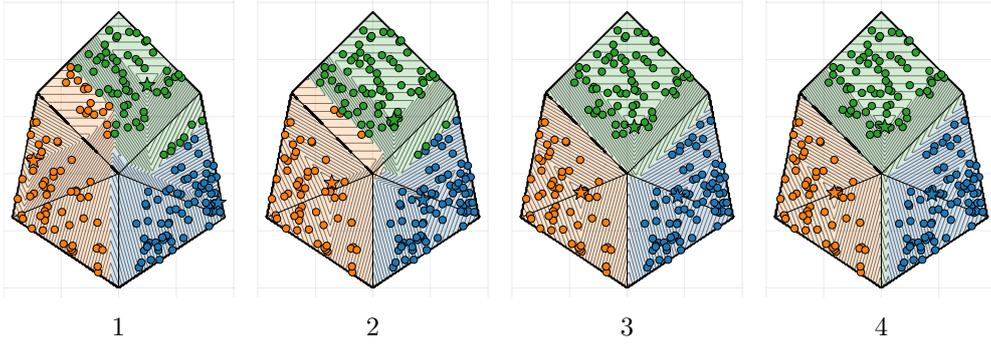}%
  			}%
  		}
  		\caption{%
  			Iterations of tropical $k$-means++ clustering of phylogenetic trees on the four taxa $a$, $b$, $c$, $d$.
  			The visualization of the space of such trees corresponds to \zcref{fig:bergmanfan}.
  		}
  		\label{fig:clustering-steps}
  	\end{figure}
  	We randomly generated 200 phylogenetric trees with taxa $a$, $b$, $c$, $d$ in that order 
    such that $\nu(t) > 0.15$ and $\eta(t) \leq 1$ for every tree $t$.
  	These correspond to points in the excerpt from the Bergman fan $\bergman{4}$ already shown in \zcref{fig:bergmanfan}.
  	The iterations of the clustering are shown in \zcref{fig:clustering-steps}.
  	The stars mark the centroids of the clusters, and the colored regions are the respective Voronoi regions.
  	The centroid in the $(i+1)$st picture is the tropical median of the points in picture $i$ of that color.
  	In the last picture, the process has become stationary.
    The resulting clusters correspond to the 3 different coarse types of the trees.
  	The pictures also show the isolines of the distance to the nearest centroid.
  \end{example}
  
  \section{Oscar Implementation}\label{sec:implementation}
  Our implementation of tropical $k$-means clustering in the computer algebra system \Oscar\ \cite{OSCAR}
  builds upon the implementation of tropical medians \cite{ComaneciJoswig:24} in the software package \Polymake\ \cite{Polymake},
  which is included with \Oscar.
  The method of \textcite{ComaneciJoswig:24} computes the tropical median by solving an optimal transport problem.
  Depending on the field, the method for solving this problem differs.
  
  The code for tropical $k$-means clustering, specifically the clustering conducted in \zcref{ssec:apicomplexa} has been made available as a git repository accessible via
  \url{https://github.com/dmg-lab/clustering-phylogenetic-trees}.
  The computations reported were obtained using commit 97788af9.
  
  \subsection{Tropical \K-means clustering in floating point arithmetic}
  \Oscar\ and \Polymake\ are capable of computing tropical medians
  (and hence, through our implementation, tropical $k$-means clustering)
  in exact arithmetic; e.g., using rational numbers.
  However, the computational effort for computing the tropical $k$-means clustering quickly becomes prohibitive
  with growing dimension and number of points.
  Therefore, computing the tropical $k$-means clustering of a set of phylogenetic trees over rational numbers
  usually is not feasible.
  
  Instead, our computational experiments reported on in this section use floating point arithmetic (floats).
  The tropical medians algorithm in \Polymake\ calls \MCF\ \cite{mcf} for solving the underlying optimal transport problem
  over floats.
  
  While computationally efficient, the discrete nature of floats
  inevitably causes computational artifacts that do not occur in exact arithmetic.
  For example, the vectors representing trees will not lie exactly on the Bergman fan.
  
  To make this precise, we call a point $t \in \torus{n}$ an \emph{$\epsilon$-ultrametric} for $\epsilon > 0$ sufficiently small if 
  $(1-\epsilon)\max\{t_{ab}, t_{ac}, t_{bc}\} \leq \Mid(t_{ab}, t_{ac}, t_{bc})$ for all $a, b, c \in X$ where 
  \[\Mid(t_{ab}, t_{ac}, t_{bc}) \coloneqq t_{ab} + t_{ac} + t_{bc} - \max\{t_{ab}, t_{ac}, t_{bc}\} - \min\{t_{ab}, t_{ac}, t_{bc}\}\ .\]
  The set of $\epsilon$-ultrametrics is called the \emph{$\epsilon${-}thickening of the Bergman fan} $\bergman{N}_{\epsilon}$.
  Clearly, this set is full-dimensional which prohibits it from being a tropical linear space.
  In particular, the dimension of the Fermat--Weber set of a set of $\epsilon$-ultrametrics is not bounded by \zcref{eq:dim_trop_median} but instead by $\gcd(m, n) -1$ where $m$ is the number of $\epsilon$-ultrametrics; see \cite[Theorem 7]{ComaneciJoswig:24}.
  Moreover, we do not know if the $\epsilon$-thickening of the Bergman fan is tropically convex.

  \begin{question}\label{q:epsilon_ultra_metric}
    Does there exist $\epsilon > 0$ such that the $\epsilon$-thickening of the Bergman fan is tropically convex?
    If not, does it contain the tropical convex hull of $t_1, \ldots, t_m \in \bergman{N}_{\epsilon}$ if its dimension is smaller or equal than $N{-}2$?
  \end{question}
  
  An answer to this question would imply when the tropical median of a set of $\epsilon${-}ultrametrics is again an $\epsilon${-}ultrametric.
  Unfortunately, a full investigation is beyond the scope of this article.
  
  Due to the numerics of floating point numbers, it might happen that given a set of $\epsilon$-ultrametrics,
  \Polymake\ computes a $\delta$-ultrametric, $\delta \gg \epsilon$, as the tropical median of $\mathcal{S}$.
  We stress the fact that this does not disprove \zcref{q:epsilon_ultra_metric} as computations with floating point numbers are by definition not exact. 
  
  In order to keep these numerical instabilities from piling up, we employ the following correction scheme.
  Let $\epsilon_0 \approx 10^{-16}$ be difference between the floating point number one and the next largest floating point number.
  After the computation of a tropical median consensus tree $t = \t + \RR\one$,
  if $\t$ is a $\delta$-ultrametric only for $\delta > \sqrt{\epsilon_0}$,
  we replace $t$ by $t' = \t' + \RR\one$ where $\t' \in \RR^n$ is the largest ultrametric smaller than $\t$.
  Again, due to the discrete nature of floating point numbers, it might happen that $\t'$ is not exactly representable;
  however, it seems always possible to find $\t'$ as an $\epsilon$-ultrametric for $\epsilon < \sqrt{\epsilon_0}$.
  
    
  Last, we encountered empty Fermat--Weber sets in our experiment as they became overdetermined.
  While the \texttt{tropical\_median} function in \Oscar\ returns a point which is not ultrametric 
  we consider this point a mere artifact of floating point arithmetic and not of any further use.
  We only observed this when the number of trees $m$ was divisible by $n$ where the dimension of the Fermat-Weber set can be full-dimensional due to the use of floats.
  Note that by reducing the number of trees by one to $m-1$, the tropical median becomes unique for both rational and floating point numbers. 
  As the tropical median has shown to be very robust to single outliers \cite[Example 23]{ComaneciJoswig:24}, 
  our strategy is to compute the tropical median of a subset of $m-1$ points.
  
  We will now proceed to describe how our method performs when clustering a real dataset of 268 phylogenetic trees on 8 taxa.

  \subsection{Tropical \K-means clustering of apicomplexa gene trees}\label{ssec:apicomplexa}
  The experiment described below is conducted on a dataset of 268 phyolgenetic trees of 7 different species of apicomplexa and one outgroup, namely \emph{Tetrahymena thermophila} (Tt).
  The dataset has been reconstruced by Kuo, Wares and Kissinger \cite{KWK08}.
  The seven species of apicomplexa are
    \emph{Babesia bovis} (Bb),
    \emph{Cryptosporidium parvum} (Cp),
    \emph{Eimeria tenella} (Et),
    \emph{Plasmodium falciparum} (Pf),
    \emph{Plasmodium vivax} (Pv),
    \emph{Theileria annulata} (Ta) and
    \emph{Toxoplasma gondii} (Tg).
    
 Their tropical consensus tree was computed in \cite[Section 5.3]{ComaneciJoswig:24}.
 All 268 trees in the dataset are non-binary and of 17 different coarse types.
 We ran the tropical $k$-means clustering algorithm~\ref{al:kmedian} (with $k = 17$) $100$ times.
 The distribution of losses of the computed clusterings is displayed \zcref{fig:losses}.
 Of these, the clustering $\mathcal{C}$ with the smallest loss is displayed in \zcref{fig:k-means-apicomplexa}.
 The total number of trees per cluster in $\mathcal{C}$ is displayed in \zcref{tab:size-apicomplexa-cluster}.
 
 \begin{figure}
   \includegraphics[scale=.3]{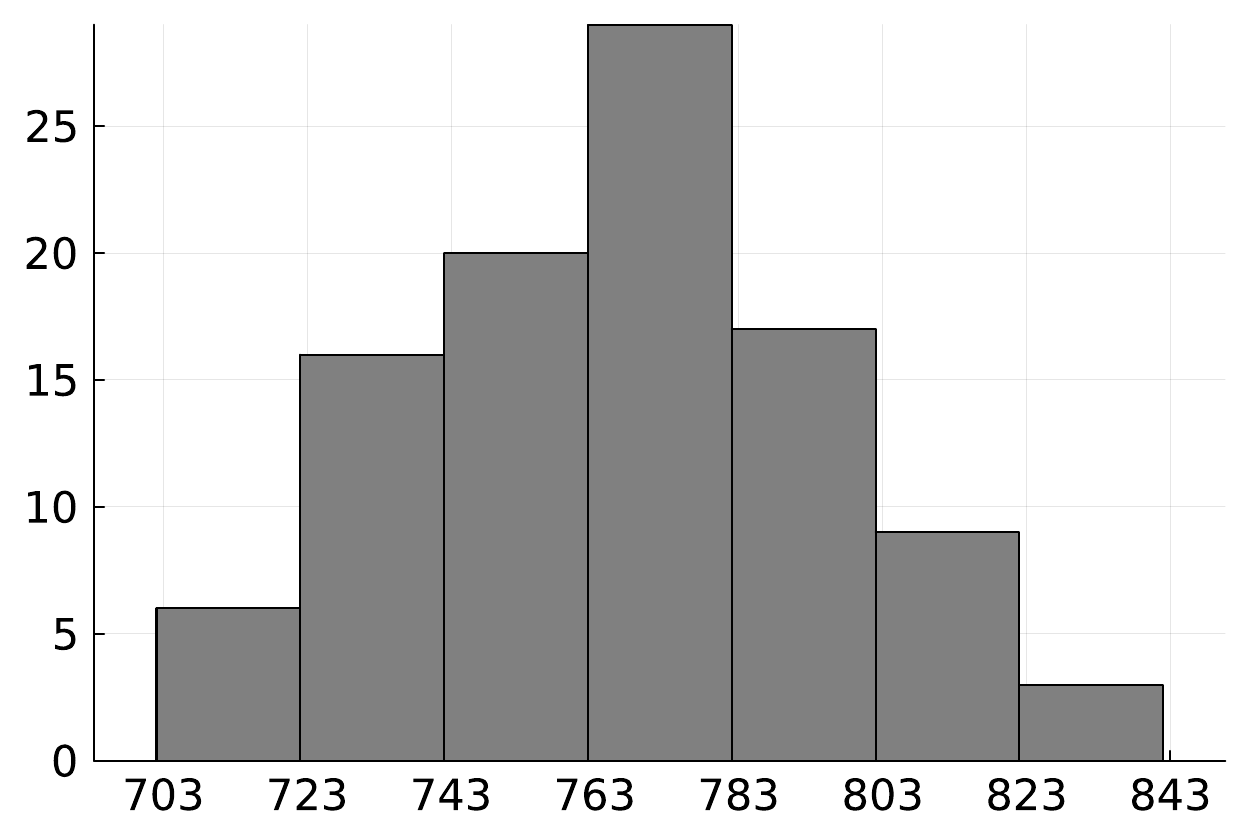}
   \caption{Losses of $100$ different tropical $17$-means++ clusterings of the apicomplexa dataset.}
   \label{fig:losses}
 \end{figure}

 \begin{figure}
 	\foreach \i in {1,...,17} {
 		\subcaptionbox*{\i}{
 				\includegraphics[width=.2\linewidth]{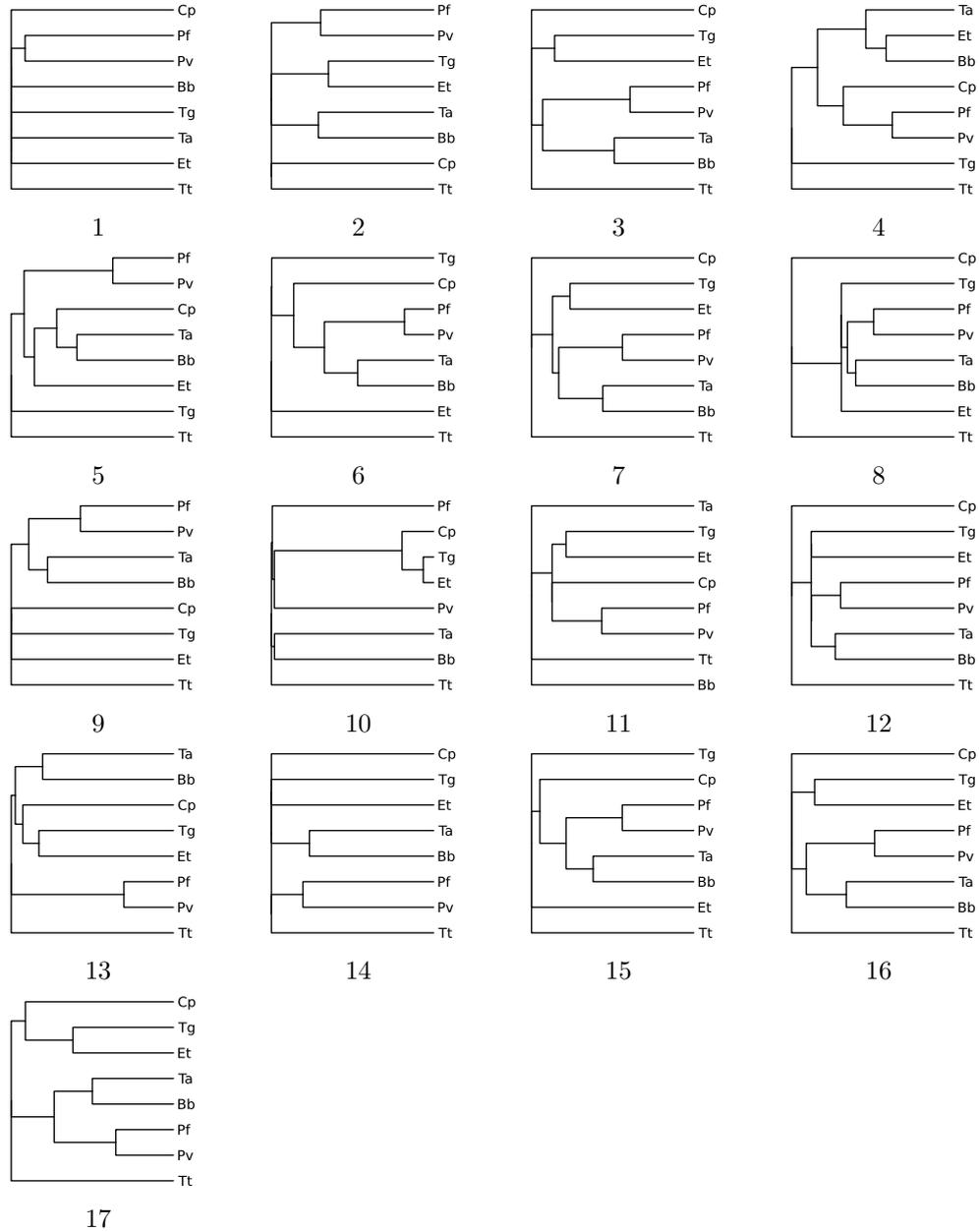}
 		}\hfill%
 	}
 	\caption{The set of tropical median consensus trees which are the final centroids of tropical $k$-means clustering for $k = 17$ with the smallest loss in \zcref{fig:losses}.}
 	\label{fig:k-means-apicomplexa}
 \end{figure}
 
 The resulting consensus trees are of 13 different coarse types.
 In particular, centroids 7, 8 and 12 have the same coarse type as well as 
 centroids 3 and 16,
 centroids 4 and 5 and 
 centroids 6 and 15 respectively. 
 
 \begin{table}
   \begin{tabular}{c|*{16}{c}c}
   	\toprule
     $i$ & 1 & 2 & 3 & 4 & 5 & 6 & 7 & 8 & 9 & 10 & 11 & 12 & 13 & 14 & 15 & 16 & 17 \\
     \midrule
     $|T_i|$ & 31 & 19 & 6 & 1 & 2 & 1 & 18 & 14 & 29 & 1 & 27 & 52 & 1 & 44 & 12 & 9 & 1 \\\bottomrule
   \end{tabular}
   \caption{The total number of trees in each cluster $T_i$ whose centroid is the consensus tree $i$ in \zcref{fig:k-means-apicomplexa}.}
   \label{tab:size-apicomplexa-cluster}
 \end{table}
 \smallskip
 
 Note that there are 5 different clusters, namely $T_4$, $T_6$, $T_{10}$, $T_{13}$ and $T_{17}$ which are singletons. 
 We call their elements \emph{lonely centroids}.
 We investigate whether they arise this way naturally as isolated outliers or whether there are too many clusters.
 Thus we reduce $k$ to 10 and again, compute 100 different clusterings.
 Among them, 69 have centroid 10 as a lonely centroid.
 Centroids 4, 6, 13 and 17 appear as lonely centroids in 13, 26, 6 and 3 of the 100 clusterings respectively. 
 We can therefore assume that they are all indeed isolated to different degrees.
 
 Before we begin to analyse the consensus trees in \zcref{fig:k-means-apicomplexa} we note that the tropical consensus method is known to be conservative, i.e., it avoids false positives; see \cite[Section 6]{ComaneciJoswig:24}.
 Additionally it is \emph{Pareto} and \emph{co-Pareto}. 
 That is, if three leaves corresponding to the taxa $a, b, c \in X$ form a \emph{rooted triplet} $ab|c$, i.e., $t_{ab} < t_{ac} = t_{bc}$ in every tree of the underlying dataset 
 then it is contained in the tropical median consensus tree.
 On the other hand, if a rooted triples appears in the consensus tree, then there exists at least one tree which contains the same rooted triplet. 

 The resulting consensus trees of tropical $k$-means++ clustering highlight the heterogeneity of the underlying dataset.
 Yet, their reduced number makes it easier to detect similarities as well as discrepancies.
 Consider for example the clade \textit{(Pf, Pv)}.
 It appears in all but one consensus tree which represent $99.6\%$ of the trees. 
 It implies a high support in the underlying dataset.
 However the timing of their split, measured as the depth of the node corresponding to their most recent common ancestor is almost uniformly distributed between $0.4$ and $1.14$ with one outlier at $0$ and another at $2.5$, see \zcref{fig:split-ambiguity-clustering}.
 
 The clade \textit{(Bb, Ta)} is found in all consensus trees but centroids 1, 4 and 11 which represent to $89.6\%$ of all trees.
 Again the timing of their split is varied widely but almost uniformly between with 2 outliers at $0$ and one close to $1.5$; see \zcref{fig:split-ambiguity-clustering}.
 
 \begin{figure}
   \includegraphics[scale=.2]{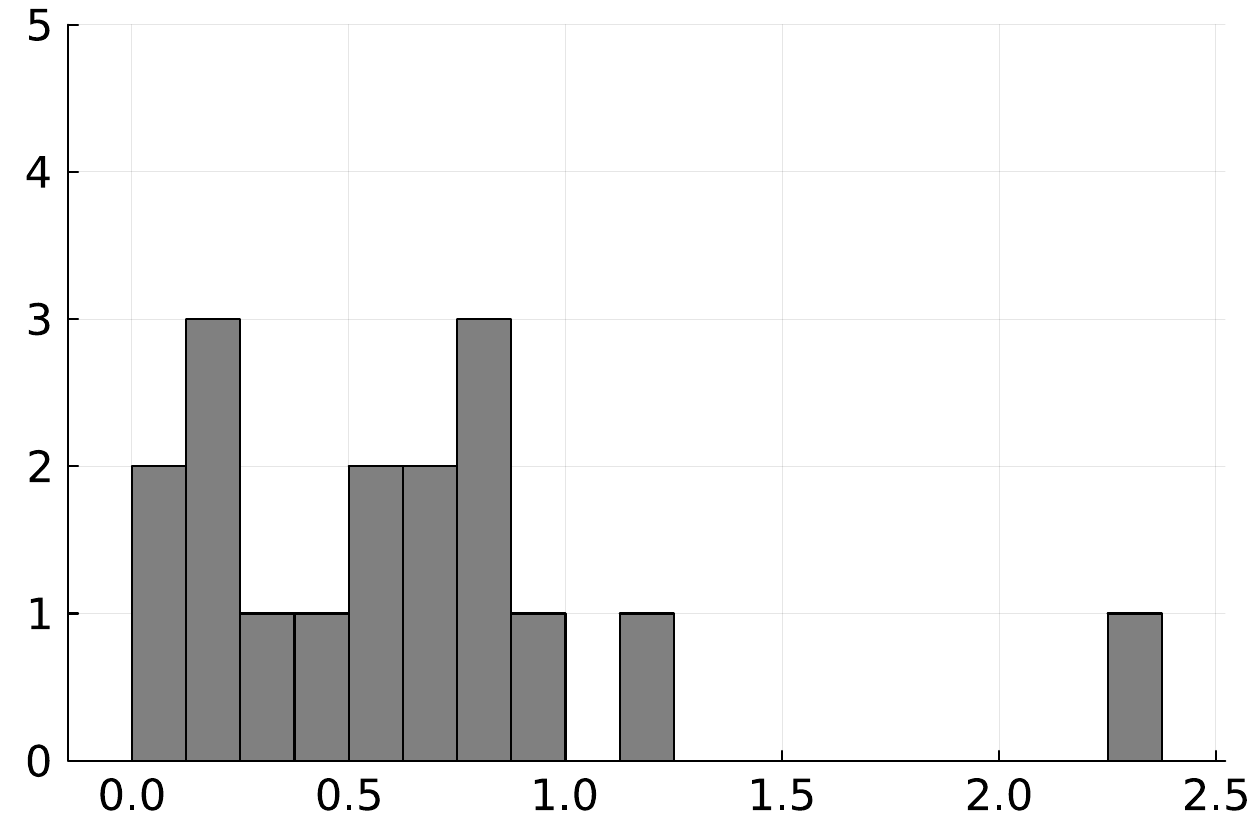} \qquad 
   \includegraphics[scale=.2]{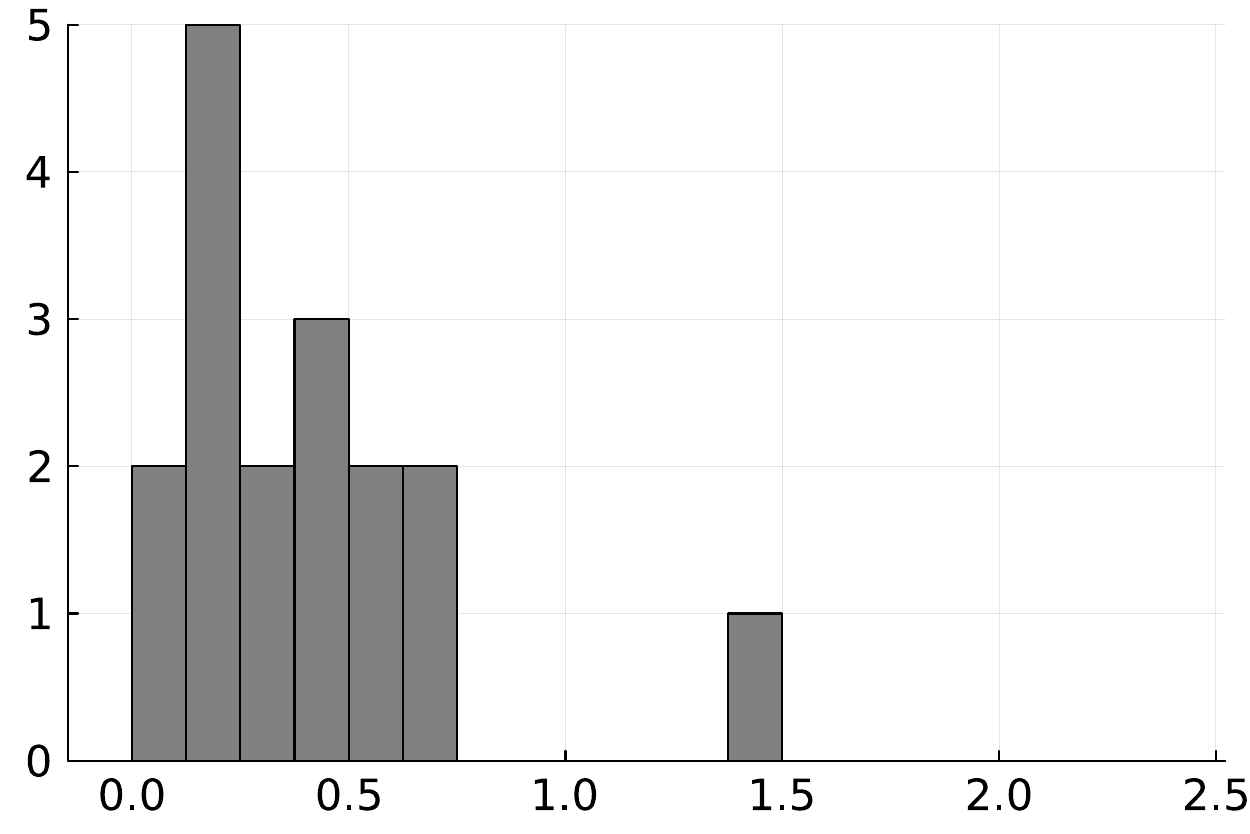}
   \caption{The distribution of the depth of the node corresponding to the most recent common ancestor of \textit{Pf} and \textit{Pv} (left) and \textit{Bb} and \textit{Ta} (right) in the centroids of $\mathcal{C}$ displayed in \zcref{fig:k-means-apicomplexa}.}
   \label{fig:split-ambiguity-clustering}
 \end{figure}
 
 For both clades, their strong relationship in the clustering is representative to the original dataset
 as $99.2\%$ of the sample trees contain the clade \textit{(Pf, Pv)} and $94\%$ contain the clade \textit{(Bb, Ta)}.
 
 Additionally we examine the support of the clade \emph{((Bb, Ta), (Pf, Pv))} as in the original paper by \textcite[Table 2]{KWK08}.
 In the clustering in \zcref{fig:k-means-apicomplexa} there are in total 90 trees belonging to a cluster whose consensus tree supports the aforementioned clade.
 This equates to 30\% compared to 44\% based on ML Consensus in \cite{KWK08}.
 Note that this is another indicator for the conservativeness of the tropical consensus method.
  
 Finally, we observe the clade \textit{(Et, Tg)} which appears in centroids 2, 3, 7, 10, 11, 13, 16, and 17 which represent $30.6\%$ of all trees.
 The relatively low percentage in representation is largely due to the fact that the clade does not appear in any of the four biggest clusters corresponding to centroids 8, 1, 14, 12.
 Taking all consensus trees as well as the size of the cluster they represent into consideration suggests that the split of \textit{(Et, Tg)} happened on average earlier than the split of the previous two groups or in fewer sample or both.
 In fact, the latter turns out to be true.
 The clade is supported by $87\%$ of trees in the original sample contain the clade \textit{(Et, Tg)}.
 \zcref{fig:split-ambiguity-sample} depicts the depth of the node corresponding to the most recent common ancestor of the three pairs.
   \begin{figure}
  	\includegraphics[scale=.2]{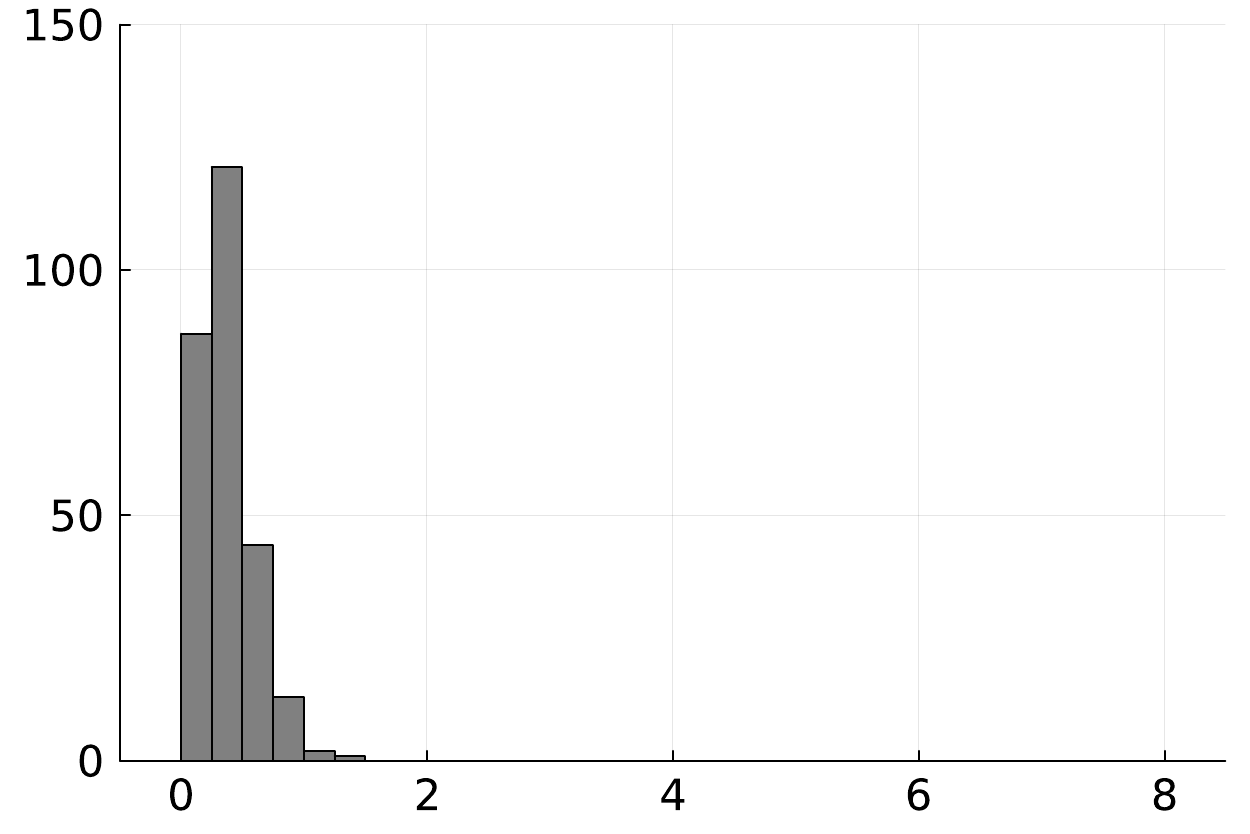}   
  	\includegraphics[scale=.2]{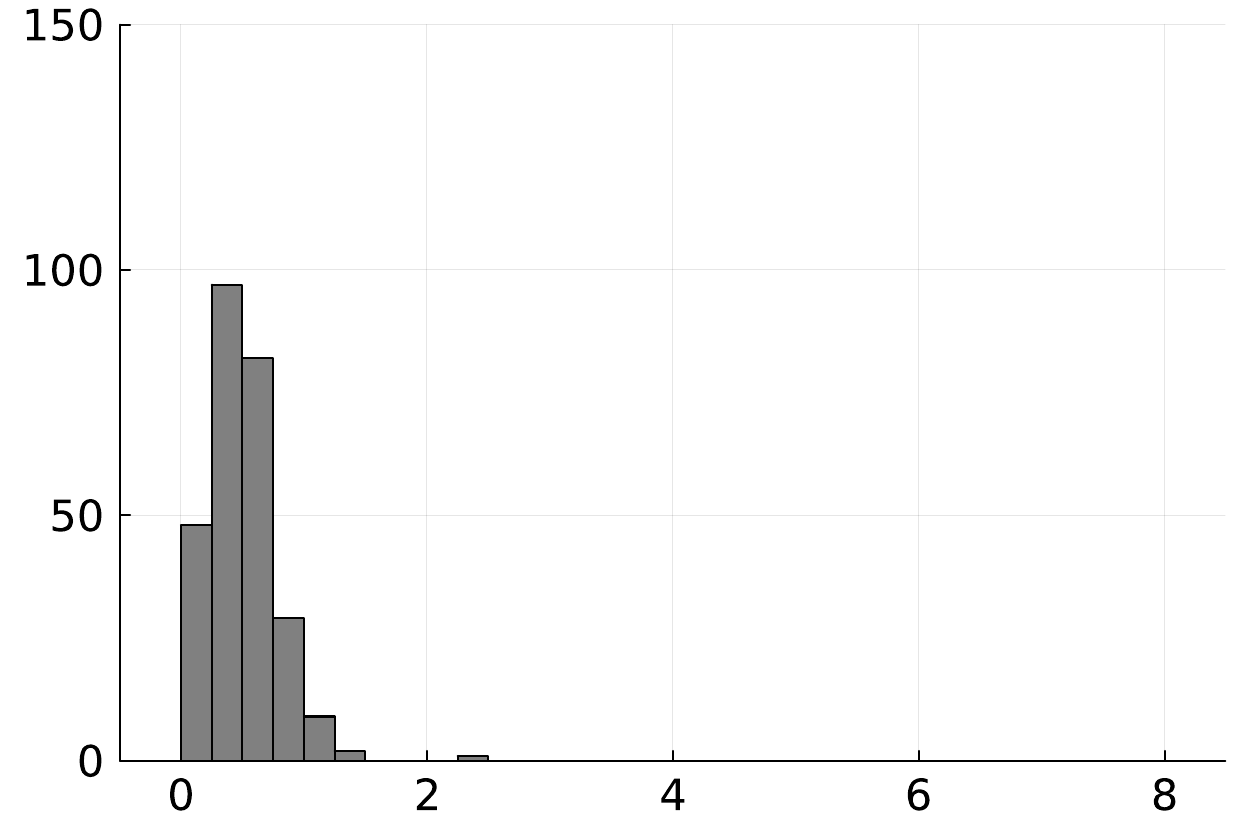}
  	\includegraphics[scale=.2]{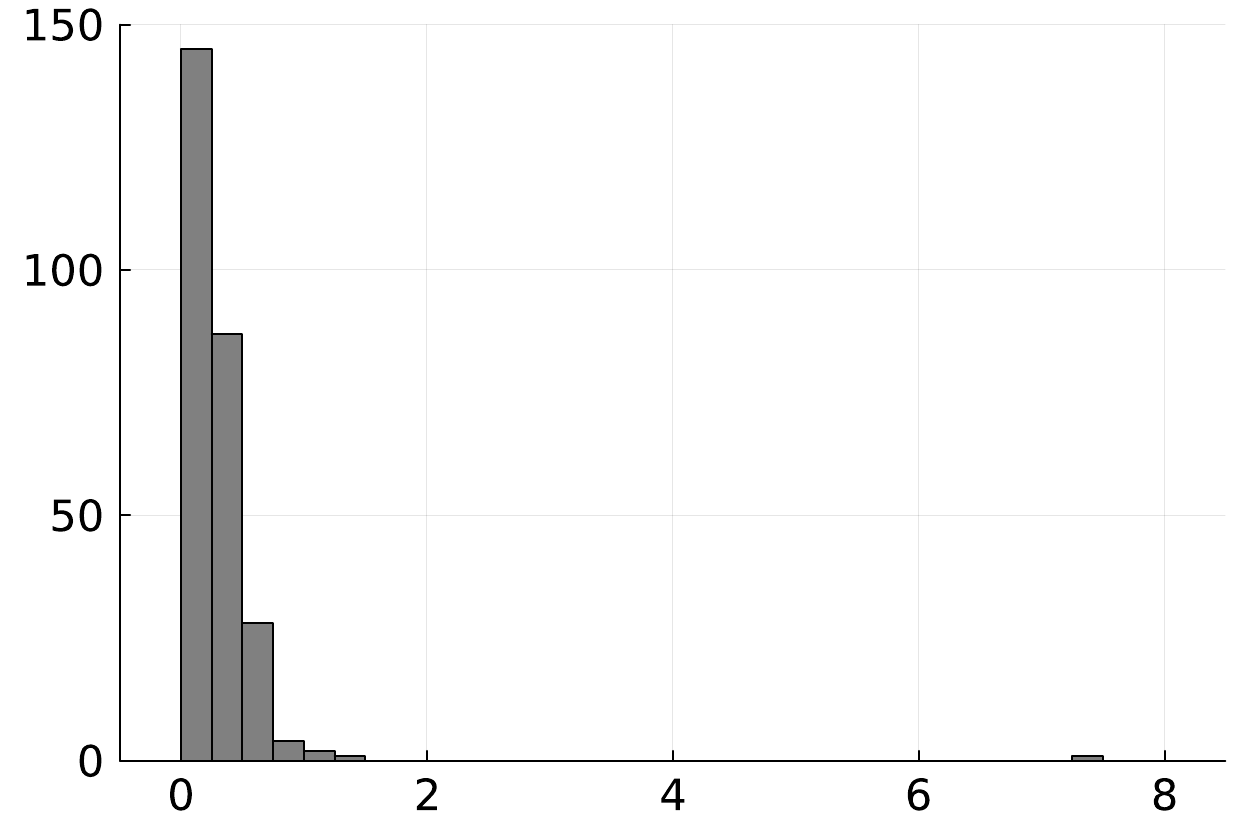}
  	\caption{The depth of the node corresponding to the most recent common ancestor of \textit{Bb} and \textit{Ta} (left), \textit{Pf} and \textit{Pv} (middle) as well as \textit{Et} and \textit{Tg} (right) in the original dataset of 268 trees.}
  	\label{fig:split-ambiguity-sample}
  \end{figure} 
    
  \section{Outlook}\label{sec:outlook}
  For a homogeneous dataset of phylogenetic trees with only a few different combinatorial types, the question whether we can recover a partition into combinatorial types becomes interesting.
  By a similar argument as in \zcref{thm:coarse-type-median} one can show that the maximal cones of the coarse subdivision of the Bergman fan $\bergman{N}$ are tropically convex.
  Assume that the trees $t$ are ``sufficiently binary'', i.e., there exists $\zeta > 0$ such that for every three taxa $a,b,c \in X$ we have $\max\{t_{ab},t_{ac}, t_{bc}\} - \min\{t_{ab},t_{ac},t_{bc}\} > \zeta$.
  Is there a lower bound for $\zeta$ and an upper bound on $\eta(t)$ for the trees $t$ such that we can recover the partition (into combinatorial types) with tropical $k$-means++ clustering? 
  
  In order to better analyse the resulting clustering produced by tropical $k$-means clustering, it is essential to understand all possible trees which can be contained in a given cluster.
  Since all such trees are contained in a ball around the corresponding centroid with respect to the tropical asymmetric metric,
  this motivates the study of the intersection of such a ball with the Bergman fan $\bergman{N}$.
  
%

  \small\sloppy
  \printbibliography
\end{document}